\newtheorem{theorem}{Theorem}[section]
\newtheorem{cor}[theorem]{Corollary}
\newtheorem{prop}[theorem]{Proposition}
\newtheorem{lem}[theorem]{Lemma}
\theoremstyle{definition}
\newtheorem{defn}{Definition}[section]
\newtheorem{ex}[theorem]{Example}
\def\vint{\mathop{\mathchoice%
          {\setbox0\hbox{$\displaystyle\intop$}\kern 0.22\wd0%
           \vcenter{\hrule width 0.6\wd0}\kern -0.82\wd0}%
          {\setbox0\hbox{$\textstyle\intop$}\kern 0.2\wd0%
           \vcenter{\hrule width 0.6\wd0}\kern -0.8\wd0}%
          {\setbox0\hbox{$\scriptstyle\intop$}\kern 0.2\wd0%
           \vcenter{\hrule width 0.6\wd0}\kern -0.8\wd0}%
          {\setbox0\hbox{$\scriptscriptstyle\intop$}\kern 0.2\wd0%
           \vcenter{\hrule width 0.6\wd0}\kern -0.8\wd0}}%
          \mathopen{}\int}
\newcommand{\R}{\mathbb{R}}
\newcommand{\Rn}{{\mathbb R}^n}
\newcommand{\C}{\mathbb{C}}
\newcommand{\Om}{\Omega}
\newcommand{\om}{\omega}
\newcommand{\ud}{\mathrm {d}}
\newcommand{\Hm}{\mathbb{H}^m}
\newcommand{\Hei}{{\mathbb{H}}^{1}}
\newcommand{\He}{\mathbb{H}}
\newcommand{\frk}[1]{\mathfrak{#1}}
\newcommand{\dbd}[2]{\frac{\partial#1}{\partial #2}}
\newcommand{\scr}[1]{\mathscr{#1}}
\definecolor{blau}{rgb}{0.1,0.0,0.9}
\definecolor{violet}{rgb}{0.54, 0.17, 0.89}
\newcommand{\blue}{\color{blau}}
\newcommand{\kom}[1]{}
\renewcommand{\kom}[1]{{\bf \blue /#1/}}
\newcounter{komcounter}
\numberwithin{komcounter}{section}
\def\Xint#1{\mathchoice
	{\XXint\displaystyle\textstyle{#1}}%
	{\XXint\textstyle\scriptstyle{#1}}%
	{\XXint\scriptstyle\scriptscriptstyle{#1}}%
	{\XXint\scriptscriptstyle\scriptscriptstyle{#1}}%
	\!\int}
\def\XXint#1#2#3{{\setbox0=\hbox{$#1{#2#3}{\int}$}
		\vcenter{\hbox{$#2#3$}}\kern-.5\wd0}}
\def\dashint{\Xint-}
\begin{document}

\title[Harmonic curves from $\Rn$ to $\Hei$]{Harmonic curves from Euclidean domains to Heisenberg group $\Hei$ }
\author[T.\ Adamowicz]{Tomasz Adamowicz}
\address{The Institute of Mathematics, Polish Academy of Sciences \\ ul. \'Sniadeckich 8, 00-656 Warsaw, Poland}
\email{tadamowi@impan.pl}
\author[M.\ Capolli]{Marco Capolli}
\address{Department of Mathematics "Tullio Levi-Civita", Universit\`a degli studi di Padova \\ via Trieste 63, 35121 Padua, Italy}
\email{marco.capolli@unipd.it}
\author[B.\ Warhurst]{Ben Warhurst}
\address{Institute of Mathematics, University of Warsaw \\ ul. Banacha 2, 02-097 Warsaw, Poland}
\email{benwarhurst68@gmail.com}

\keywords{Contact equations, Dirichlet energy, geometric mapping theory, harmonic map, Heisenberg groups, weak isotropic condition}
\subjclass[2020]{(Primary) 58E20;   (Secondary) 35H20, 35B53, 35B50}

\begin{abstract}
 We define and study the harmonic curves on domains in $\Rn$ into the first Heisenberg group $\Hei$. These are the $C^2$-regular mappings which are critical points of the second Dirichlet energy and satisfy the weak isotropicity condition. We investigate the geometry of such curves including the comparison and maximum principles,  the Harnack inequalities, the Liouville theorems, the existence results, the Phragm\`en--Lindel\"of theorem, as well as the three spheres theorem.
\end{abstract}

\maketitle

\section{Introduction}
  Harmonic mappings appear naturally in a variety of problems in pure and applied mathematics, for instance, in nonlinear elasticity theory, non-newtonian fluid dynamics, glaciology and cosmology. In pure mathematics harmonic mappings are investigated in differential geometry, in the context of mappings on metric spaces or in relation to differential forms and quasiregular maps.
%
In this work, our focus is directed towards a specific instance of harmonic maps, namely those defined on domains within the Euclidean space with target in the Heisenberg groups $\Hm$ with the emphasize on the first Heisenberg group $\Hei$. There are natural motivations for studying harmonic mappings in the Carnot setting. For example, the fact that Carnot groups, such as Heisenberg groups, have in general curvature, in the sense of Alexandrov, unbounded from above does not allow to directly apply the regularity theory as in~\cite{korschn}. This in turn leads to necessity of developing new techniques and considerations of additional imposed assumptions on mappings, for example the contactivity or isotropic conditions, cf.~\cite{cl}.

Let $\Om\subset \Rn$ be a domain and $f$ be a Sobolev map in $W^{1,2}(\Om, \Hm)$ denoted by $f=(\mathsf{z}, \mathsf{t})$, $\mathsf{z}=\mathsf{x} +i \mathsf{y}$, where $\mathsf{z}: \Om\to \mathbb{R}^{2m}$ and $\mathsf{t}:\Om \to \R$, see Definition~\ref{defn-Sob} and the detailed discussion in Section 2. Then, by Theorems 2.11-2.14 in~\cite{cl} the Korevaar--Schoen energy of $f$ coincides with
$$
E^2(f)= \int_\Omega |\nabla \mathsf{z}|^2 dx,
$$
cf. the discussion at~\eqref{def-E2}. Moreover, the associated Euler--Lagrange system of equations takes the following weak form (see~\eqref{crit-main} and Example 3.1 below):
\begin{equation*}
\int_{\Omega} \sum_{k=1}^{m} \Big (   |\nabla \mathsf{z}^k|^2  \dbd{\xi^\gamma}{x_\gamma} - 2 \sum_{\beta=1}^{n}   \dbd{\mathsf{z}^k}{x_\beta} \overline{ \dbd{\mathsf{z}^k}{x_\gamma}}      \dbd{\xi^\gamma}{x_\beta}  \Big) \, dx=0, \quad \gamma=1,\dots,n,\quad \xi \in C_0^\infty(\Omega,\R^{2n}).
\end{equation*}
However, the geometry of Heisenberg groups strikes back by imposing one more condition on $\mathsf{z}$ namely, the weak isotropicity, see~\eqref{isotrop}. In particular, Theorem 2.17 in~\cite{cl} says that~\eqref{isotrop} allows us to find the non-horizontal component $\mathsf{t}$ of a map $f$ via the contact equations~\eqref{nablat}, see Section 2.
For general $m>1$, the weak isotropicity condition involves minors of the horizontal Jacobi matrix and is, therefore, computationally challenging and difficult in use. It is only the case $m=1$, when condition~\eqref{isotrop} becomes ~\eqref{isotropm1} and allows us to observe that the image of a nontrivial $f$ is a horizontal curve. In a consequence, the component functions $\mathsf{x}, \mathsf{y}$ and $\mathsf{t}$ of $f$ turn out to satisfy elliptic PDEs of the Laplace type, respectively, \eqref{criticaly}, \eqref{eq-f1} and~\eqref{harmonic-t}, see the discussion in Section 3. This in turn enables us to investigate the geometry of such harmonic curves. Our results include:
\begin{itemize}
\item[(1)] the Caccioppoli estimate (Section 4.1),
\item[(2)] the Liouville theorems (Section 4.2),
\item[(3)] the superharmonicity result (Section 4.3),
\item[(4)] the comparison and maximum principles, the Harnack inequality (Section 4.4),
\item[(5)] the existence and uniqueness results (Section 4.5),
\item[(6)] the Phragm\`en--Lindel\"of theorem (Section 4.6),
\item[(7)] the three spheres theorem (Section 4.7).
\end{itemize}


\section{Preliminaries} Following \cite{KORANYI19951}, the Heisenberg group $\He^m$ is given by $\R^{2m+1}$, where we denote coordinates by $({\mathsf{x}},{\mathsf{y}},{\mathsf{t}})$, with ${\mathsf{x}},{\mathsf{y}} \in \R^m$ and ${\mathsf{t}} \in \R$, and the group product is given by the formula
$$({\mathsf{x}},{\mathsf{y}},{\mathsf{t}})({\mathsf{x}}',{\mathsf{y}}',{\mathsf{t}}')=({\mathsf{x}}+{\mathsf{x}}', {\mathsf{y}}+{\mathsf{y}}', {\mathsf{t}}+{\mathsf{t}}'-2 {\mathsf{x}} \cdot {\mathsf{y}}'+2 {\mathsf{y}} \cdot {\mathsf{x}}').$$
If $\{e_i:i=1,\dots, 2m+1\}$ is the standard basis for $\R^{2m+1}$ and $\frk{h}^m$ denotes the Lie algebra where the nontrivial Lie brackets are $[e_i,e_{m+i}]=-4 e_{2m+1}$, $i=1,\dots,m$, then the product above is the Baker--Campbell--Hausdorff model of $\He^m$ built on $\frk{h}^m$.

The left invariant vector fields are framed by
\begin{align*} 
\tilde X_i &= \dbd{}{{\mathsf{x}}_i}+2{\mathsf{y}}_i \dbd{}{{\mathsf{t}}} \quad i=1,\dots , m\\
\tilde Y_i &= \dbd{}{{\mathsf{y}}_i}-2{\mathsf{x}}_i \dbd{}{{\mathsf{t}}}  \quad i=1,\dots , m\\
\tilde T &=\dbd{}{{\mathsf{t}}}
\end{align*}   
for which the only nontrivial brackets are  $$[\tilde X_i,\tilde Y_i] =-4 \tilde T \quad i=1,\dots,m.$$ 

The frame $\{d{\mathsf{x}}_i, d{\mathsf{y}}_i, \theta \}$, where $\theta =d{\mathsf{t}} +2 \sum_i ({\mathsf{x}}_i d{\mathsf{y}}_i -{\mathsf{y}}_i d{\mathsf{x}}_i)$, is the dual of the frame $\{\tilde X_i, \tilde Y_i, \tilde T \}$ and ${\rm ker \, } \theta=\scr{H}$, the horizontal subbundle $\scr{H}={\rm span}\{\tilde X_1,\ldots, \tilde X_m, \tilde Y_1,\ldots, \tilde Y_m\}$. In particular, $\theta$ is a contact form since
$$
(d \theta)^m \wedge \theta =2^{2m} d{\mathsf{x}}_1 \wedge \dots \wedge d{\mathsf{x}}_m \wedge d{\mathsf{y}}_1 \wedge \dots \wedge d {\mathsf{y}}_m \wedge d{\mathsf{t}}. 
$$

The Kor\'anyi metric on $\Hm$ is defined as $d(p, q)=\|p^{-1}q\|$ where
 $$
 \| ({\mathsf{x}},{\mathsf{y}},{\mathsf{t}}) \|= ( (|{\mathsf{x}}|^2+|{\mathsf{y}}|^2)^2 + {\mathsf{t}}^2)^{1/4}.
 $$ 
 Moreover the Haar measure agrees with Lebesgue measure up to a scalar factor.  

Mappings from domains in $\R^n$ to $\He^m$ are natural test cases for the Korevaar--Schoen theory of energy, see \cite{korschn}, with nonriemannian target. When considering variational problems for maps between Riemannian manifolds, and more generally maps from Riemannian manifolds to suitable metric measure spaces $(\mathbb{X}, d, \mu)$, the vital ingredient is a notion of Sobolev space $W^{1,\alpha}(\Omega, \mathbb{X})$, where $\Omega$ is a domain in a Riemannian manifold and $1,\alpha$ indicate that the first order derivatives of the map are $\alpha$-integrable. 

More precisely, the space $L^\alpha (\Omega, \mathbb{X} )$ is defined as the set of Borel-measurable mappings $f : \Omega \to \mathbb{X}$ such that
$$\int_{\Omega} d(f(p),P_0)^\alpha dp < \infty $$
for some point $ P_0 \in \mathbb{X}$. It follows that the set $L^\alpha (\Omega, \mathbb{X} )$ becomes a complete metric space when the distance between $f$ and $g$ is given by
$$
\rho(f,g)=\int_{\Omega} d(f(p),g(p))^\alpha dp, 
$$ 
see 2.4.12 in~\cite{fedr}. 
 The Korevaar--Schoen energy of a map $f \in L^\alpha (\Omega, \mathbb{X} )$ has a variety of specific definitions (not all equivalent), given by the choice of a density function $e_{f,\epsilon}$  and defined as 
\begin{align*}
E^\alpha(f,\Omega)=\sup \left \{ \limsup_{\epsilon \to 0} \int_{\Omega} \phi(p) e_{f,\epsilon}(p) dp : \phi \in C_0(\Omega, [0,1])\right \}. 
\end{align*}
There are two fundamental choices for $e_{f,\epsilon}(p)$, namely the boundary averaging density given by
\begin{align*}
 e_{f,\epsilon}(p)&=\int_{ \partial B_{\epsilon}(p) }  \left (\frac{d(f(p),f(q))}{\epsilon}  \right )^\alpha \frac{d \sigma_\epsilon(q)}{\epsilon^{n-1}}, 
\end{align*}
where $d \sigma_\epsilon(q) $ is the Riemannian surface measure on $\partial B_{\epsilon}(p)$
and the volume averaging density given by
\begin{align*}
e_{f,\epsilon}(p)&=\dashint_{ B_{\epsilon}(p) }  \left (\frac{d(f(p),f(q))}{\epsilon}  \right )^\alpha dq. 
\end{align*}
The volume averaging density is appealing from the metric measure space perspective, however the space of finite energy maps does not resemble a classical Sobolev space, see \cite[Theorem 1.1]{tkh}. 

In the case $\alpha=2$, finite energy maps relative to the boundary averaging density, do form a space resembling a classical Sobolev space. Theorems 2.11, 2.13 and 2.14 in~\cite{cl} prove that if $\Om\subset \Rn$ is a bounded smooth domain and $f=(\mathsf{z}, \mathsf{t})$, $\mathsf{z}=\mathsf{x} +i \mathsf{y}$, then
\begin{itemize}
\item[(i)] If $E^2(f,\Omega)<\infty$ then the function $ \mathsf{z} \in L^2 (\Omega, \R^{2m} )$ is weakly differentiable and $ \mathsf{z} \in W^{1,2}(\Omega, \R^{2m} )$.
\item[(ii)]  If $E^2(f,\Omega)<\infty$ then the function $ \mathsf{t} \in L^{1} (\Omega, \R)$ is weakly differentiable and $(f^*\theta)_x=0$ for a.e. $x \in \Omega$. Moreover
\begin{align}
 \nabla \mathsf{t} = 2( \mathsf{y} \cdot \nabla \mathsf{x} - \mathsf{x} \cdot \nabla \mathsf{y} ) \in L^\beta(\Omega, \R), \quad \beta=\frac{n}{n-1}. \label{nablat} 
\end{align}
\item[(iii)] The energy takes the form
\begin{equation}
 \displaystyle E^2(f,\Omega)= \int_\Omega |\nabla \mathsf{z}|^2 dx. \label{def-E2}
\end{equation} 
\end{itemize}
We stress the fact that we only consider the case $\alpha=2$, instead of the more generic case presented in~\cite{cl}. This is due to the fact that the discussion in~\cite{cl} does not support the cases $\alpha \ne 2$ due to Lemma 2.5 in~\cite{cl} which underpins most of the results in the paper and is only true when $\alpha = 2$, see page 574 in~\cite{korschn} and the discussion following Theorem 1.1 in~\cite{tkh}.

%
%

%

\begin{defn}\label{defn-Sob} The Sobolev space  $W^{1,2}(\Om, \Hm)$ is defined as the set of functions $f=(\mathsf{z}, \mathsf{t})$, $\mathsf{z}=\mathsf{x} +i \mathsf{y}$, such that $E^2(f,\Omega)<\infty$.
\end{defn}

When $f$ is smooth, the condition $f^*\theta=0$ implies that $d(f^*\theta)=f^*d \theta=0$ which means that $f$ is isotropic. Indeed, the symplectic form defined on the horizontal bundle of $\mathbb{H}^m$ is given by 
$$ d \theta=  4 \sum_{k=1}^m d\mathsf{x}^k  \wedge d \mathsf{y}^k$$
and
$$ f^* d \theta=\mathsf{z}^*d \theta = 4  \sum_{\alpha <\beta} \sum_{k=1}^m \left ( \dbd{\mathsf{x}^k}{x_\alpha} \dbd{\mathsf{y}^k}{x_\beta} - \dbd{\mathsf{x}^k}{x_\beta} \dbd{\mathsf{y}^k}{x_\alpha} \right )  dx_\alpha \wedge dx_\beta.$$

The condition that $df_p(T_p\R^n)$ is an isotropic subspace of $T_{f(p)}\R^{2m}$ is exactly $ f^*d \theta=0$ which in our coordinates gives the following conditions:
\begin{align}
\sum_{k=1}^m \left ( \dbd{\mathsf{x}^k}{x_\alpha} \dbd{\mathsf{y}^k}{x_\beta} - \dbd{\mathsf{x}^k}{x_\beta} \dbd{\mathsf{y}^k}{x_\alpha} \right )=0, \quad \alpha<\beta,\, \alpha, \beta=1,\dots, n. \label{isotrop}
\end{align}

If $\mathsf{z}:\Omega \to \C^m$ is smooth, then any smooth function $t:\Omega \to \R$ gives an extension $g=(\mathsf{z}, t):\Omega \to \He^m$ such that $\mathsf{z}^*d \theta=d(g^*\theta)$.  If \eqref{isotrop} holds then $g^*\theta$ is closed and the Poincar\'e lemma implies that locally (on simply-connected sets) there exists a function $h$ such that $d h = g^*\theta$. It follows that $\mathsf{t} = t-h$ gives an extension $f=(\mathsf{z} , \mathsf{t})$ such that $f^*\theta=0$. Theorem 2.16 in~\cite{cl} shows that if $f \in W^{1,2}(\Om, \Hm)$ then $\mathsf{z} : \Omega \to \R^{2m}$ is weakly isotropic
in the sense that \eqref{isotrop} holds a.e. on $\Omega$, and Theorem 2.17 in~\cite{cl} shows that if $\mathsf{z} : \Omega \to \R^{2m}$ is weakly isotropic then there exists a function $\mathsf{t}$ such that $f = (\mathsf{z}, \mathsf{t})  \in W^{1,2}(\Om, \Hm)$. These results are obtained by an indirect smoothing argument similar to \cite{dairb}.

%


\section{Euler--Lagrange system of equations}	

 Theorem~4.1 of \cite{cl} shows that if $\Om\subset \Rn$ is s bounded domain, then the Euler--Lagrange system for the variational equation $\frac{d}{ds} E^2(\mathsf{z}_s)|_{s=0}=0$, where $\mathsf{z}_s=\mathsf{z}( x + s \xi(x))$ and  $\xi \in C_0^\infty(\Omega,\R^{n})$, takes the following weak form:
\begin{equation}\label{crit-main}
\int_{\Omega} \sum_{k=1}^{m} \Big (   |\nabla \mathsf{z}^k|^2  \dbd{\xi^\gamma}{x_\gamma} - 2 \sum_{\beta=1}^{n}   \dbd{\mathsf{z}^k}{x_\beta} \overline{ \dbd{\mathsf{z}^k}{x_\gamma}}      \dbd{\xi^\gamma}{x_\beta}  \Big) \, dx=0, \quad \gamma=1,\dots,n.
\end{equation}
In particular, the equations above are for variations of the form $\xi=(0, \dots,\xi^\gamma, \dots,0)$, and so setting $\xi^\gamma=\phi$ for any test function  $\phi \in C_0^\infty(\Omega,\R)$, we have the following equations
\begin{align*}
\int_{\Omega} \sum_{k=1}^{m} \Big (|\nabla \mathsf{z}^k|^2  \dbd{\phi}{x_\gamma} - 2 \sum_{\beta=1}^{n}   \dbd{\mathsf{z}^k}{x_\beta} \overline{ \dbd{\mathsf{z}^k}{x_\gamma}}      \dbd{\phi}{x_\beta}  \Big) \, dx=0, \quad \gamma=1,\dots,n.
\end{align*}
\begin{ex}

Let us analyze system~\eqref{crit-main} in the case of $f:\Omega\to\Hei$ a Sobolev map in $W^{1,2}(\Om, \Hei)$, as in Definition~\ref{defn-Sob}. Then $f(x_1,\dots,x_{n})=(\mathsf{x},\mathsf{y},\mathsf{t})$ with $\mathsf{x},\mathsf{y},\mathsf{t}:\Omega\to\R$. Moreover, with the slight abuse of notation, let us set $\mathsf{z}=(\mathsf{x}(x_1,\ldots,x_n),\mathsf{y}(x_1,\ldots,x_n))$, the horizontal part of map $f$. 

Then, an equivalent formulation of system~\eqref{crit-main}, perhaps more convenient to study, is as follows.

\begin{equation*}
\begin{cases}
	\displaystyle{\int_{\Omega} \left(|D\mathsf{z}|^2-2 \left|\dbd{\mathsf{z}}{x_1}\right|^2\right) \dbd{\phi^1}{x_1}-2 \left(\dbd{\mathsf{z}}{x_1}\cdot \dbd{\mathsf{z}}{x_2}\right)\dbd{\phi^1}{x_2} - \dots - 2 \left(\dbd{\mathsf{z}}{x_1}\cdot \dbd{\mathsf{z}}{x_n}\right)\dbd{\phi^1}{x_n}\,\ud x=0}\vspace*{.1cm}\\
	\displaystyle{\int_{\Omega} -2 \left(\dbd{\mathsf{z}}{x_2}\cdot \dbd{\mathsf{z}}{x_1}\right)\dbd{\phi^2}{x_1} + \left(|D\mathsf{z}|^2-2 \left|\dbd{\mathsf{z}}{x_2}\right|^2\right)\dbd{\phi^2}{x_2}- \dots - 2 \left(\dbd{\mathsf{z}}{x_2}\cdot \dbd{\mathsf{z}}{x_n}\right)\dbd{\phi^2}{x_n}\,\ud x=0}\\
	\hspace*{5cm} \vdots\\
	\displaystyle{\int_{\Omega}-2 \left(\dbd{\mathsf{z}}{x_n}\cdot \dbd{\mathsf{z}}{x_1}\right)\dbd{\phi^n}{x_1} - \dots -2 \left(\dbd{\mathsf{z}}{x_n}\cdot \dbd{\mathsf{z}}{x_{n-1}}\right)\dbd{\phi^n}{x_{n-1}}+\left(|D\mathsf{z}|^2-2 \left|\dbd{\mathsf{z}}{x_n}\right|^2\right)\dbd{\phi^n}{x_n}\,\ud x=0},
\end{cases}
\end{equation*}
where $\phi=(\phi^1,\dots,\phi^{n})\in C_0^{\infty}(\Omega,\Rn)$ is a test mapping.
\end{ex}


Setting $A=\sum_{k=1}^{m}   |\nabla \mathsf{z}^k|^2 $ and $B_{\gamma,\beta}= - 2   \sum_{k=1}^{m} \dbd{\mathsf{z}^k}{x_\beta} \overline{ \dbd{\mathsf{z}^k}{x_\gamma}}$ brings equations~\eqref{crit-main} into the form
\begin{align*} 
 \int_{\Omega}  \sum_{\beta=1}^{n} (\delta_{\gamma,\beta} A + B_{\gamma,\beta}   )\dbd{\phi}{x_\beta} dx=0, \quad \gamma=1,\dots,n. 
\end{align*}
 Note that the isotropy conditions \eqref{isotrop} imply that  $$B_{\gamma,\beta}=- 2   \sum_{k=1}^{m} \dbd{\mathsf{x}^k}{x_\beta}  \dbd{\mathsf{x}^k}{x_\gamma} + \dbd{\mathsf{y}^k}{x_\beta}  \dbd{\mathsf{y}^k}{x_\gamma}.$$

Next we set $F_\gamma^\beta= \delta_{\gamma,\beta} A+B_{\gamma, \beta}$ and $F_\gamma=(F_\gamma^1, \dots, F_\gamma^{n})$ so that the previous equation becomes
$$
  \int_{\Omega}  \langle  F_\gamma , \nabla \phi \rangle \, dx=0, \quad \gamma=1,\dots,n,
 $$
or
$$
  \int_{\Omega}  {\rm div} F_\gamma \, \phi  \, dx=0, \quad \gamma=1,\dots,n,
$$
at least in the distributional sense. If $f=(\mathsf{z}, t)$ is $C^2$ and a critical point of the energy $E^2$ on $\Omega$, then ${\rm div} F_\gamma=0$ on $\Omega$. Moreover, direct calculation shows that
\begin{align}
{\rm div} F_\gamma &= -2 \sum_k \Big(  \Delta \mathsf{x}^k  \dbd{\mathsf{x}^k}{x_\gamma} + \Delta \mathsf{y}^k \dbd{\mathsf{y}^k}{x_\gamma}    \Big ), \quad \gamma=1,\dots,n. \label{divfm}
\end{align}

%
%

In the specific case $m=1$, the condition that $df_p(T_p\R^n)$ is an isotropic subspace of $T_{f(p)}\R^{2}$ implies ${\rm dim}\, df_p(T_p\R^n) \leq 1$, hence the image $f(\Omega)$ is a point or a horizontal curve. The isotropy conditions \eqref{isotrop} become
\begin{align}
\dbd{\mathsf{x}}{x_\alpha} \dbd{\mathsf{y}}{x_\beta} - \dbd{\mathsf{x}}{x_\beta} \dbd{\mathsf{y}}{x_\alpha} =0,\quad  \alpha<\beta,\, \alpha, \beta=1,\dots, n. \label{isotropm1}
\end{align}

Let $\mathsf{y}:\Omega \to \R$ be any $C^1$ function. If $I$ is an interval containing the image of $\mathsf{y}$ and $H : I \to \R$ is any $C^2$ function,  then $\mathsf{x} + i \mathsf{y}$, where $\mathsf{x}=H\circ \mathsf{y}$, defines a $C^1$ solution of \eqref{isotropm1} on $\Omega$ such that $\nabla \mathsf{x} =(H' \circ \mathsf{y})  \nabla \mathsf{y}$. Furthermore, the discussion at the end of the previous section shows that $ \mathsf{z}=H\circ \mathsf{y}+ i \mathsf{y}$ extends locally to a map $f=(\mathsf{z}, \mathsf{t})$ into $\He^1$ with the property $f^*\theta=0$.  We note that \eqref{nablat} implies that ${\rm span}\,\{\nabla \mathsf{x}, \nabla \mathsf{y}, \nabla \mathsf{t}\}= {\rm span}\,\{\nabla \mathsf{y}\}$ pointwise, hence confirming that $f=(\mathsf{z},  \mathsf{t})$ parametrizes a point or a horizontal curve.

If the $f$ is not constant, we can choose $\mathsf{y}$ so that $\dbd{\mathsf{y}}{x_\gamma}$ is not identically zero for some particular index $\gamma$, and set $\mathsf{x}=H\circ \mathsf{y}$ as above. Then
\begin{align*}
{\rm div} F_\gamma &= -2  \Big(  H''(\mathsf{y}(x)) H'(\mathsf{y}(x)) |\nabla \mathsf{y}|^2  + \Big (1 + H'(\mathsf{y}(x))^2     \Big ) \Delta \mathsf{y}  \Big )\dbd{\mathsf{y}}{x_\gamma}.
\end{align*}
Hence, if $\mathsf{z}=H\circ \mathsf{y} + i \mathsf{y}$, then $f=(\mathsf{z}, \mathsf{t})$ is a critical point of $E^2$ if and only if
\begin{align}
\Delta \mathsf{y}  &= - \frac{(H''\circ \mathsf{y}) (H'\circ \mathsf{y})}{1 + (H' \circ \mathsf{y})^2 } |\nabla \mathsf{y}|^2 = - r(\mathsf{y}, H', H'') |\nabla \mathsf{y}|^2. \label{criticaly}
\end{align} 

Let $m>1$ and for each $k=1,\dots, m$, let $\mathsf{z}^k =\mathsf{x}^k + i \mathsf{y}^k$ be a solution to the isotropy equations \eqref{isotropm1}, where $\mathsf{y}^k:\Omega \to \R$ is any $C^1$ function and  $\mathsf{x}^k=H_k\circ \mathsf{y}^k$  where $H_k$ is any $C^2$ function defined on the image of $\mathsf{y}^k$.  If $ \mathsf{z}=(\mathsf{z}^1, \dots, \mathsf{z}^m)$, then as discussed in the paragraph following equation $\eqref{isotrop}$, there exists locally a function $\mathsf{t}$ such that $f=(\mathsf{z} , \mathsf{t})$ satisfies $f^*\theta=0$. Furthermore, \eqref{divfm} shows that $f$ is critical if each $\mathsf{z}^k$ is the complex part of a critical map from $\Omega$ to $\Hei$. In particular, we stress the fact that a function $\mathsf{y}^k:\Omega\to \R$ needs not to depend on all the variables.

Given that $\mathsf{x}=H\circ\mathsf{y}$, we have by direct calculation the following two identities 
\[
\nabla \mathsf{x}= \nabla (H\circ \mathsf{y})=(H'\circ \mathsf{y})\nabla \mathsf{y},\quad\hbox{ and }\quad \Delta \mathsf{x}=(H''\circ \mathsf{y}) |\nabla \mathsf{y}|^2+(H'\circ \mathsf{y}) \Delta \mathsf{y},
\]
which together with~\eqref{criticaly} imply that $\mathsf{x}$ satisfies the following equation:
\begin{equation}\label{eq-f1}
\Delta \mathsf{x}=\frac{H''\circ \mathsf{y}}{1+(H'\circ \mathsf{y})^2}|\nabla \mathsf{y}|^2.
\end{equation}

As for the third component function $\mathsf{t}$, the contact condition implies that the following equation is satisfied by $\mathsf{t}$:
\begin{equation*}
\nabla \mathsf{t}= 2\mathsf{y}\,\nabla \mathsf{x}-2\mathsf{x}\,\nabla \mathsf{y}=2\big(\mathsf{y} (H'\circ \mathsf{y})-H\circ \mathsf{y}\big)\nabla \mathsf{y}. 
\end{equation*}
Hence
\begin{align*} 
\Delta \mathsf{t} &=2 \langle \nabla \big(\mathsf{y} (H'\circ \mathsf{y})-H\circ \mathsf{y}\big) , \nabla \mathsf{y} \rangle + 2\big(\mathsf{y} (H'\circ \mathsf{y})-H\circ \mathsf{y}\big)  \Delta \mathsf{y}\\
&=2 \mathsf{y} (H''\circ \mathsf{y}) |\nabla \mathsf{y}|^2 + 2\big(\mathsf{y} (H'\circ \mathsf{y})-H\circ \mathsf{y}\big)  \Delta \mathsf{y}.
\end{align*}
Moreover, if $f=(\mathsf{z}, \mathsf{t})$ is critical then by~\eqref{criticaly} we have 
\begin{align*}
 \Delta \mathsf{t} &= 2 \mathsf{y} (H''\circ \mathsf{y}) |\nabla \mathsf{y}|^2 - 2 \big (\mathsf{y}(H' \circ \mathsf{y})- \, H \circ \mathsf{y}  \big )  \frac{(H' \circ \mathsf{y})(H''\circ \mathsf{y})} {1 + (H' \circ \mathsf{y})^2} |\nabla \mathsf{y}|^2 
\end{align*}
which in the case $H$ is not constant shows that
\begin{align}
\Delta \mathsf{t} &= 2\frac{\left(\mathsf{y}+(H \circ \mathsf{y})(H' \circ \mathsf{y})\right)(H''\circ \mathsf{y})} {1 + (H' \circ \mathsf{y})^2} |\nabla \mathsf{y}|^2. \label{harmonic-t}
\end{align}

\section{Geometry of the harmonic curves from $\Rn$ to $\Hei$}

From now on, unless specified differently, we will investigate the \emph{harmonic curves}, i.e. $C^2$ critical points of the energy $E^2$, cf.~\eqref{def-E2}, 
$$
f=(\mathsf{x},\mathsf{y},\mathsf{t}): \Om \to \Hei,
$$
 defined on a domain $\Om\subset \Rn$ and satisfying the weak isotropicity condition~\eqref{isotropm1} for $m=1$. We will also use the term \emph{$C^2$-harmonic mapping} in order to emphasize relations of the so-defined harmonic curves to the critical points of the energy $E^2$.

\subsection{The Caccioppoli estimate}
We now derive a classical important energy estimate for solutions of~\eqref{criticaly}.  The weak formulation of \eqref{criticaly} reads as:
\begin{equation}
\int_\Omega \langle \nabla \mathsf{y}, \nabla \phi \rangle=  \int_\Omega \frac{(H''\circ \mathsf{y}) (H'\circ \mathsf{y})}{1 + (H' \circ \mathsf{y})^2 } |\nabla \mathsf{y}|^2 \phi \label{eq-k-missing}
\end{equation}
for any test function $\phi \in C_0^\infty(\Omega, \R)$.

\begin{lem}
 Let $\Om\subset \Rn$ be a bounded domain and $f=(H\circ \mathsf{y} + i\mathsf{y} ,\mathsf{t})$ be a harmonic curve. If 
 \begin{equation}\label{est-Cac-cond}
 \sup_\Omega \Big |\frac{(H''\circ \mathsf{y}) (H'\circ \mathsf{y})}{1 + (H' \circ \mathsf{y})^2 } \, \mathsf{y}  \Big |:=\mu < 1,
 \end{equation}
 then there exists a positive constant $C(\mu)$ such that the following Caccioppoli-type estimate holds for any test function $\eta\in C_0^\infty(\Om)$
\begin{equation}\label{est-Cac}
 \int_\Omega |\nabla \mathsf{y} |^2 \eta^2 \leq C(\mu) \int_\Omega |\nabla \eta|^2 \mathsf{y}^2.
 \end{equation} 
\end{lem}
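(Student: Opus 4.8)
The plan is to run the standard Caccioppoli argument, testing the weak equation~\eqref{eq-k-missing} against the choice $\phi = \mathsf{y}\,\eta^2$. The structural hypothesis~\eqref{est-Cac-cond} is precisely what is needed so that the leading quadratic term on the left absorbs the \textquotedblleft bad\textquotedblright\ first-order term produced by the nonlinear right-hand side of the PDE.

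First I would substitute $\phi=\mathsf{y}\,\eta^2$, for which $\nabla\phi = \eta^2\nabla\mathsf{y} + 2\mathsf{y}\,\eta\,\nabla\eta$, into~\eqref{eq-k-missing}. Writing $r=r(\mathsf{y},H',H'')$ for the coefficient appearing in~\eqref{criticaly}, the left-hand side of~\eqref{eq-k-missing} becomes $\int_\Omega \eta^2|\nabla\mathsf{y}|^2 + 2\int_\Omega \mathsf{y}\,\eta\,\langle\nabla\mathsf{y},\nabla\eta\rangle$ while the right-hand side becomes $\int_\Omega r\,\mathsf{y}\,|\nabla\mathsf{y}|^2\eta^2$, so that after rearranging
\begin{equation*}
\int_\Omega (1 - r\,\mathsf{y})|\nabla\mathsf{y}|^2\eta^2 = -2\int_\Omega \mathsf{y}\,\eta\,\langle\nabla\mathsf{y},\nabla\eta\rangle.
\end{equation*}
By~\eqref{est-Cac-cond} we have $|r\,\mathsf{y}|\le\mu<1$ pointwise, hence $1-r\,\mathsf{y}\ge 1-\mu>0$ and the left-hand side is coercive: it dominates $(1-\mu)\int_\Omega|\nabla\mathsf{y}|^2\eta^2$.

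Next I would bound the right-hand side by Cauchy--Schwarz and then Young's inequality with a parameter $\epsilon>0$, using
$$
2|\mathsf{y}|\,|\eta|\,|\nabla\mathsf{y}|\,|\nabla\eta|\le \epsilon\,\eta^2|\nabla\mathsf{y}|^2 + \epsilon^{-1}\mathsf{y}^2|\nabla\eta|^2 .
$$
Choosing $\epsilon=(1-\mu)/2$ allows the term $\epsilon\int_\Omega\eta^2|\nabla\mathsf{y}|^2$ to be absorbed into the coercive left-hand side, and what survives is exactly~\eqref{est-Cac} with the explicit constant $C(\mu)=4/(1-\mu)^2$. This is the only place the smallness $\mu<1$ is used, and it is used sharply.

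The single point that needs care---the main, though minor, obstacle---is admissibility of the test function, since~\eqref{eq-k-missing} is stated for $\phi\in C_0^\infty(\Omega,\R)$ whereas $\mathsf{y}\,\eta^2$ is not smooth. Because $f$ is a harmonic curve, $\mathsf{y}\in C^2(\Omega)$, so $\mathsf{y}\,\eta^2\in W^{1,2}(\Omega)$ with compact support in $\Omega$; moreover on the (compact) support of $\eta$ the quantities $\nabla\mathsf{y}$ and $r$ are continuous, hence bounded, so both sides of~\eqref{eq-k-missing} depend continuously on $\phi$ in the $W^{1,2}$ topology. A routine density/approximation argument then extends the validity of~\eqref{eq-k-missing} from $C_0^\infty(\Omega,\R)$ to $\phi=\mathsf{y}\,\eta^2$, after which the computation above yields~\eqref{est-Cac}.
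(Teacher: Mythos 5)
Your proposal is correct and follows essentially the same route as the paper: test the weak formulation with $\phi=\mathsf{y}\,\eta^2$, use $|r\,\mathsf{y}|\le\mu<1$ to keep the zeroth-order term from destroying coercivity, and absorb the cross term by Young's inequality, arriving at the same constant $C(\mu)=4/(1-\mu)^2$ that the paper obtains in its second derivation. Your additional remark on the admissibility of the non-smooth test function $\mathsf{y}\,\eta^2$ (handled by density, or simply by noting it is $C^2$ with compact support) is a point the paper passes over silently, but it does not change the argument.
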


Before presenting the proof of the estimate, let us analyze the condition~\eqref{est-Cac-cond}. It is easy to see that, by direct differentiation, one gets the following
\[
\frac{(H''\circ \mathsf{y}) (H'\circ \mathsf{y})}{1 + (H' \circ \mathsf{y})^2 } \, \mathsf{y}=  \frac{s}{2} \frac{\ud}{\ud s} \left(\ln(1+(H'(s))^2\right)\Big|_{s=\mathsf{y}},
\] 
and, moreover, condition~\eqref{est-Cac-cond} holds, for instance if $|(H''\circ \mathsf{y})\,(H' \circ \mathsf{y})|\leq C<1$ in $\Om$. 
\begin{proof}
In the proof we follow the standard approach in obtaining the energy estimate for elliptic PDEs. Namely, let $\eta \in C_0^\infty (\Omega ,\R)$ and define the following test function $\phi:=\eta^2 \mathsf{y}$. Then $\nabla \phi = 2\eta \,  \mathsf{y} \nabla \eta  + \eta^2\nabla \mathsf{y}$ which upon testing \eqref{eq-k-missing} with $\phi$ implies that  
 \begin{align*}
  \int_\Omega \langle \nabla \mathsf{y},  2 \eta \, \mathsf{y}  \, \nabla \eta+ \eta^2 \nabla \mathsf{y}  \rangle &= \int_\Omega  \frac{(H''\circ \mathsf{y}) (H'\circ \mathsf{y})}{1 + (H' \circ \mathsf{y})^2 } |\nabla \mathsf{y}|^2 \,\eta^2 \mathsf{y}. 
 \end{align*}
  Therefore,
 \begin{align} 
 \int_\Omega |\nabla \mathsf{y} |^2 \eta^2 &= \int_\Omega   r(\mathsf{y}, H', H'') |\nabla \mathsf{y}|^2 \,\eta^2 \mathsf{y}  -  2 \int_\Omega \langle \nabla \mathsf{y}, \nabla \eta  \rangle\eta \mathsf{y}  \nonumber \\
 &\leq  \int_\Omega   \Big | r(\mathsf{y}, H', H'') \, \mathsf{y} \Big |  |\nabla \mathsf{y}|^2 \,\eta^2   +  2 \int_\Omega |  \eta \nabla \mathsf{y}| \,  |\mathsf{y} \nabla \eta   | \nonumber  \\
 &\leq  \int_\Omega   \Big | r(\mathsf{y}, H', H'') \, \mathsf{y} \Big |  |\nabla \mathsf{y}|^2 \,\eta^2   +   \int_\Omega \lambda^2  | \nabla \mathsf{y}|^2 |  \eta|^2 +  \int_\Omega \frac{1}{\lambda^2} |\mathsf{y} |^2 | \nabla \eta   |^2, \label{caccpolineq0}
 \end{align} 
 where in the last step we used the classical inequality $2xy \leq \lambda^2x^2+\frac{1}{\lambda^2}y^2$. It now follows that 
 \begin{align*} 
 \lambda^2 \int_\Omega \Big ( 1 -    \lambda^2  -     | r(\mathsf{y}, H', H'') \, \mathsf{y}  |   \Big ) |\nabla \mathsf{y}|^2 \,\eta^2 
 &\leq   \int_\Omega \mathsf{y}^2 | \nabla \eta   |^2
 \end{align*} 
 which shows that when $\mu=\sup_\Omega | r(\mathsf{y}, H', H'') \, \mathsf{y}  |<1$, we have the following Caccioppoli-type estimate
 \begin{align*} 
 \Big ( 1 -    \lambda^2  - \mu^2    \Big ) \lambda^2 \int_\Omega  |\nabla \mathsf{y}|^2 \,\eta^2 
 &\leq   \int_\Omega \mathsf{y}^2 | \nabla \eta   |^2
 \end{align*} for all $\lambda$ satisfying $\lambda^2 < 1-\mu^2$. 
 Alternatively, we can apply the Cauchy-Schwarz/the H\"older inequality at the penultimate line in  \eqref{caccpolineq0} and rearrange to get 
 \begin{align*} 
  \int_\Omega  |\nabla \mathsf{y}|^2 \,\eta^2  &\leq  \frac{4}{(1-\mu)^2}   \int_\Omega | \nabla \eta   |^2 \mathsf{y}^2. 
 \end{align*} 
\end{proof}

\subsection{The Liouville theorems} As the immediate corollary from the Caccioppoli inequality we get the following Liouville type theorem for $C^2$-harmonic curves.

\begin{cor} Let $\Om = \Rn$ and $f=(H\circ \mathsf{y} + i\mathsf{y} ,\mathsf{t})$ be a harmonic curve such that 
$$
\sup_\Omega \left |\frac{(H''\circ \mathsf{y}) (H'\circ \mathsf{y})}{1 + (H' \circ \mathsf{y})^2 } \, \mathsf{y}  \right | < 1
$$ 
and $\mathsf{y}$ is bounded in $\Rn$. Then $f$ is constant.
\end{cor}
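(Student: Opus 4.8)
The plan is to read the conclusion off the Caccioppoli estimate \eqref{est-Cac} of the preceding Lemma, whose hypothesis is exactly the standing assumption $\sup_\Omega|r(\mathsf{y},H',H'')\,\mathsf{y}|<1$, by inserting a standard family of cutoffs and sending the radius to infinity. For each $R>0$ let $\eta_R\in C_0^\infty(\Rn)$ satisfy $\eta_R\equiv 1$ on the ball $B_R$ centred at the origin, $\operatorname{supp}\eta_R\subset B_{2R}$ and $|\nabla\eta_R|\le C/R$. Using that $\mathsf{y}$ is bounded on $\Rn$, estimate \eqref{est-Cac} yields
\begin{equation*}
\int_{B_R}|\nabla\mathsf{y}|^2\;\le\;C(\mu)\int_{\Rn}|\nabla\eta_R|^2\,\mathsf{y}^2\;\le\;\frac{C(\mu)\,\|\mathsf{y}\|_{L^\infty(\Rn)}^2}{R^2}\,\bigl|B_{2R}\setminus B_R\bigr|\;\le\;C'\,R^{\,n-2}.
\end{equation*}
For $n=1$ the right-hand side tends to $0$ as $R\to\infty$, forcing $\nabla\mathsf{y}\equiv0$; for $n=2$ it stays bounded, so $\mathsf{y}$ has finite Dirichlet energy on $\Rn$, after which a logarithmic cutoff (for which $\int|\nabla\eta|^2\to0$) upgrades this to $\nabla\mathsf{y}\equiv0$.

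The main obstacle is precisely the range $n\ge3$: there $|B_{2R}\setminus B_R|/R^2\sim R^{\,n-2}$ grows, and no choice of cutoff can defeat it, because a fixed ball has positive capacity in $\Rn$ once $n\ge3$, so $\int_{\Rn}|\nabla\eta|^2$ cannot be made arbitrarily small among cutoffs equal to $1$ on that ball. To cover all $n$ at once I would therefore bypass the energy estimate and exploit the special structure of \eqref{criticaly} directly. Set $\Phi(s)=\int_0^s\sqrt{1+H'(\tau)^2}\,\ud\tau$, so that $\Phi'=\sqrt{1+(H')^2}\ge1$ and $\Phi''=H'H''/\sqrt{1+(H')^2}=r(s,H',H'')\,\Phi'$. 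Writing $u:=\Phi\circ\mathsf{y}$ and using \eqref{criticaly} in the form $\Delta\mathsf{y}=-r(\mathsf{y},H',H'')|\nabla\mathsf{y}|^2$, a one-line computation gives
\begin{equation*}
\Delta u=\Phi''(\mathsf{y})\,|\nabla\mathsf{y}|^2+\Phi'(\mathsf{y})\,\Delta\mathsf{y}=\bigl(\Phi''(\mathsf{y})-r(\mathsf{y},H',H'')\,\Phi'(\mathsf{y})\bigr)|\nabla\mathsf{y}|^2=0,
\end{equation*}
so $u$ is a genuinely harmonic function on $\Rn$.

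Since $\mathsf{y}$ is bounded and $\Phi$ is continuous, $u=\Phi\circ\mathsf{y}$ is a bounded harmonic function on $\Rn$, hence constant by the classical Liouville theorem. As $\Phi'\ge1>0$, the map $\Phi$ is a strictly increasing homeomorphism of intervals, so $\mathsf{y}=\Phi^{-1}(u)$ is constant as well; then $\mathsf{x}=H\circ\mathsf{y}$ is constant, and \eqref{nablat} (equivalently $\nabla\mathsf{t}=2(\mathsf{y}(H'\circ\mathsf{y})-H\circ\mathsf{y})\nabla\mathsf{y}$) gives $\nabla\mathsf{t}\equiv0$, so $\mathsf{t}$ is constant too. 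Thus $f=(\mathsf{x},\mathsf{y},\mathsf{t})$ is constant. I expect the only delicate point to be the passage from the Caccioppoli estimate to constancy in high dimensions; the harmonic reformulation above resolves it in every dimension, at the mild cost of not invoking the smallness condition \eqref{est-Cac-cond}, which is, however, what the statement is phrased around and what the purely energy-based argument in low dimensions genuinely needs.
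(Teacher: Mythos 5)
Your final argument is correct, and it takes a genuinely different route from the paper. The paper stays inside the Caccioppoli estimate \eqref{est-Cac}: it inserts a cutoff $\eta$ equal to $1$ on $B_\rho$, supported in $B_{2\rho}$, with the postulated pointwise bound $|\nabla\eta|^2\le c\,\rho^{-(n+1)}$, so that $\int_{B_{2\rho}}|\nabla\eta|^2\lesssim \rho^{-1}\to 0$, and concludes $\nabla\mathsf{y}\equiv 0$, then $\nabla\mathsf{t}\equiv 0$ via \eqref{nablat}. Your diagnosis of the high-dimensional obstruction is exactly on target: a cutoff dropping from $1$ to $0$ across $B_{2\rho}\setminus B_\rho$ must have $|\nabla\eta|\gtrsim\rho^{-1}$ on a set of measure $\gtrsim\rho^{n}$, so $\int|\nabla\eta|^2\gtrsim\rho^{n-2}$, and no admissible cutoff satisfies the bound the paper's proof posits once $n\ge 2$; the purely energy-based argument therefore closes only for $n=1$ (and for $n=2$ via your logarithmic cutoff). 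Your substitution $u=\Phi\circ\mathsf{y}$ with $\Phi'=\sqrt{1+(H')^2}$ is the right repair: the identity $\Phi''=r(\cdot,H',H'')\,\Phi'$ combined with \eqref{criticaly} gives $\Delta u=0$ exactly, $u$ is bounded because $\mathsf{y}$ is bounded and $\Phi$ is continuous and increasing, the classical Liouville theorem then forces $u$, hence $\mathsf{y}=\Phi^{-1}(u)$, to be constant in every dimension, and constancy of $\mathsf{x}=H\circ\mathsf{y}$ and of $\mathsf{t}$ follows as in the paper. What your approach buys is a proof valid for all $n$ that moreover dispenses with the smallness hypothesis \eqref{est-Cac-cond} entirely; the only point worth making explicit is that $\Phi$ must remain bounded on the (bounded) range of $\mathsf{y}$, which holds as soon as $H'$ is bounded there, e.g.\ whenever $H$ is $C^2$ on a neighbourhood of the closure of that range.
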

\begin{proof}
 Let $\eta \in C_0^{\infty}(B_{2\rho })$ satisfy $\eta\equiv 1$ on the ball $B_\rho$  and $|\nabla \eta|^2\leq \frac{c}{\rho^{n+1}}$ for any positive radius $\rho$. Applying the estimate~\eqref{est-Cac} for the test function $\eta$ we have 
 \begin{equation*}
  \int_{B_\rho} |\nabla \mathsf{y} |^2 \leq \int_{B_{2\rho}} |\nabla \mathsf{y} |^2 \eta^2 \leq   \int_{B_{2\rho}} |\nabla \eta |^2 \mathsf{y}^2 \lesssim \|\mathsf{y} \|^2_\infty \int_{B_{2\rho}}|\nabla \eta|^2 \lesssim_{\om_n} \|\mathsf{y} \|_\infty^2 \frac{c}{\rho},
 \end{equation*} 
 where $\om_n$ stands for the volume of the unit ball in $\Rn$. Letting $\rho\to \infty$ shows that that $|\nabla \mathsf{y}| = 0$ at every point of $\Rn$ which implies that $\mathsf{y}$ must be constant. It follows from \eqref{nablat} that $\nabla \mathsf{t}=0$ at every point of $\Rn$ which implies that $\mathsf{t}$ is constant and consequently $f$ is constant. 
\end{proof}

 Even though the main subject of our interest are $C^2$-harmonic mappings, by using the monotonicity formula from~\cite{cl}, we can still prove the following variant of the Liouville theorem for Sobolev maps with values in $\Hm$.

\begin{prop}
 Let $f\in W_{loc}^{1,2}(\Rn, \Hm)$ such that $f=(\mathsf{z},\mathsf{t})$ be a weak solution to the Euler--Lagrange system of equations~\eqref{crit-main} satisfying 
 \[
 \lim_{r\to \infty} \frac{1}{r^{n-2}}\int_{B(0,r)} |\nabla \mathsf{z}|^2 =0.
 \]
 Then $|\nabla \mathsf{z}|\equiv 0$ in $\Rn$.
 
 Similarly, if $f\in W^{1,2}(\Rn, \Hm)$ such that $f=(\mathsf{z},\mathsf{t})$ is a weak solution to the Euler--Lagrange system of equations~\eqref{crit-main}, then $|\nabla \mathsf{z}|\equiv 0$ in $\Rn$.
\end{prop}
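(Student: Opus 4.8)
The plan is to invoke the monotonicity formula for critical points of $E^2$ from \cite{cl}, which is the natural tool here because the Euler--Lagrange system \eqref{crit-main} arises precisely from the inner (domain) variations $\mathsf{z}_s=\mathsf{z}(x+s\xi)$ and hence encodes stationarity with respect to reparametrizations of the domain. Writing $\Theta(r):=\frac{1}{r^{n-2}}\int_{B(0,r)}|\nabla\mathsf{z}|^2$ for the normalized energy, the monotonicity formula asserts that $\Theta$ is non-decreasing in $r$; concretely one expects an identity of the form $\frac{d}{dr}\Theta(r)=\frac{2}{r^{n-2}}\int_{\partial B(0,r)}|\partial_r\mathsf{z}|^2\ge 0$, obtained by testing the stationarity condition against (a cutoff of) the radial field $x$. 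First I would record this monotonicity, checking that the integrability afforded by $f\in W^{1,2}_{\loc}(\Rn,\Hm)$ suffices for the formula to hold for weak solutions of \eqref{crit-main}, and not merely for the smooth harmonic curves of Section~4.

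For the first statement the argument is then immediate. Since $\Theta$ is non-decreasing, for every fixed $r_0>0$ and every $r\ge r_0$ we have $\Theta(r_0)\le\Theta(r)$, and letting $r\to\infty$ the hypothesis $\lim_{r\to\infty}\Theta(r)=0$ forces $\Theta(r_0)\le 0$. As $\Theta(r_0)\ge 0$ trivially, we conclude $\Theta(r_0)=0$, i.e. $\int_{B(0,r_0)}|\nabla\mathsf{z}|^2=0$ for every $r_0$, whence $|\nabla\mathsf{z}|\equiv 0$ in $\Rn$.

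For the second statement I would reduce to the first by showing that finite total energy forces the normalized energy to decay. If $f\in W^{1,2}(\Rn,\Hm)$ then $E:=\int_{\Rn}|\nabla\mathsf{z}|^2<\infty$ by \eqref{def-E2}, so $\int_{B(0,r)}|\nabla\mathsf{z}|^2\le E$ and therefore $\Theta(r)\le E/r^{n-2}$. For $n\ge 3$ the right-hand side tends to $0$ as $r\to\infty$, so $\lim_{r\to\infty}\Theta(r)=0$ and the first part applies verbatim.

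The main obstacle is the monotonicity formula itself, which is the engine of both statements: one must be sure it is available for weak $W^{1,2}_{\loc}$-solutions of \eqref{crit-main} and that the boundary terms in its derivation are genuinely controlled by the local energy, rather than relying on the $C^2$ regularity used elsewhere in the paper. A secondary point to watch is the dimensional threshold in the second statement, since the decay $E/r^{n-2}\to 0$ only holds for $n\ge 3$; the low-dimensional cases $n=1,2$ (where $r^{n-2}$ does not grow) would require either the standing convention $n\ge 3$ or a separate argument.
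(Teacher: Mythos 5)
Your proof is correct and follows essentially the same route as the paper: both rest on the monotonicity formula for the normalized energy $\frac{1}{r^{n-2}}\int_{B(0,r)}|\nabla\mathsf{z}|^2$ from Corollary~4.2 of~\cite{cl}, use the decay hypothesis (resp.\ the bound by $\|f\|_{W^{1,2}}/R^{n-2}$) to force the normalized energy to vanish at every scale, and conclude $|\nabla\mathsf{z}|\equiv 0$. Your closing caveats --- that the formula must be valid for weak $W^{1,2}_{\mathrm{loc}}$ solutions and that the decay $E/R^{n-2}\to 0$ requires $n\ge 3$ --- are both legitimate points that the paper leaves implicit by citing~\cite{cl} directly.
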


\begin{proof}
  Since $f$ is a local solution to~\eqref{crit-main}, it is in particular a solution on any ball $B(0,r)\subset \Rn$. We can therefore appeal to the following monotonicity formula in Corollary 4.2 in~\cite{cl} holding for all $0<r<R$:
  \[
   \frac{1}{r^{n-2}}\int_{B(0,r)} |\nabla \mathsf{z}|^2 \leq  \frac{1}{R^{n-2}}\int_{B(0,R)} |\nabla \mathsf{z}|^2 \to 0, \hbox{ as } R\to \infty.
  \]
  Hence, for all $r>0$ it holds that $\int_{B(0,r)} |\nabla \mathsf{z}|^2\equiv 0$ and the first assertion follows. Similarly, the same monotonicity formula implies that if we strengthen the integrability assumption and require that $f\in W^{1,2}(\Rn, \Hm)$, then
  \[
  \frac{1}{r^{n-2}}\int_{B(0,r)} |\nabla \mathsf{z}|^2 \leq  \frac{1}{R^{n-2}}\int_{B(0,R)} |\nabla \mathsf{z}|^2\leq \frac{\|f\|_{W^{1,2}(\Rn, \Hm)}}{R^{n-2}}\to 0,\hbox{ as } R\to \infty
   \]
   and the second assertion follows as well.
\end{proof}

\subsection{Superharmonicity result}

It turns out that $|z|$, the modulus of the horizontal part $z$ of a harmonic map $f$, is a supersolution to the second order quasilinear elliptic PDE.

\begin{prop}\label{prop-supersol}
	Let $\Om\subset \Rn$ be an open set and $f=(\mathsf{z},t):\Om\to \Hei$ be a harmonic curve, $\mathsf{z}=H\circ \mathsf{y} + i\mathsf{y}$, then the function $u:=|\mathsf{z} |$ satisfies the following differential inequality:
	\begin{equation}\label{subharm-ineq}
	\Delta u + \frac{|\nabla u|^2}{u}\geq 0,
	\end{equation}
	at points in $\Om$ where $|\mathsf{z}|\not=0$, provided that $H$ satisfies
	\begin{equation}
	  H'' (H - {\rm Id} H') \geq 0, \label{cond-subh-H} 
	\end{equation}
	on the range of $\mathsf{y}$. In particular, it holds if $H$ is nonnegative and $H'\leq 0, H''\geq 0$.
\end{prop}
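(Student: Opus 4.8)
The plan is to reduce the differential inequality \eqref{subharm-ineq} to the subharmonicity of the squared modulus $u^2 = |\mathsf{z}|^2 = \mathsf{x}^2 + \mathsf{y}^2$. The key observation is the pointwise identity
\[
\Delta u + \frac{|\nabla u|^2}{u} = \frac{1}{2u}\,\Delta(u^2),
\]
which holds wherever $u=|\mathsf{z}|>0$ and follows at once from $\Delta(u^2) = 2u\,\Delta u + 2|\nabla u|^2$. Since $u>0$ at the points under consideration, establishing \eqref{subharm-ineq} is therefore equivalent to showing $\Delta(u^2)\geq 0$ there. This reduction is what removes the apparent singularity at $u=0$ from the statement and turns the problem into a clean sign condition.

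Next I would expand
\[
\tfrac{1}{2}\Delta(u^2) = \mathsf{x}\,\Delta\mathsf{x} + |\nabla\mathsf{x}|^2 + \mathsf{y}\,\Delta\mathsf{y} + |\nabla\mathsf{y}|^2
\]
and substitute the structural relations available for a harmonic curve with $\mathsf{z}=H\circ\mathsf{y}+i\mathsf{y}$. Namely, $\mathsf{x}=H\circ\mathsf{y}$ together with $\nabla\mathsf{x}=(H'\circ\mathsf{y})\nabla\mathsf{y}$ gives $|\nabla\mathsf{x}|^2=(H'\circ\mathsf{y})^2|\nabla\mathsf{y}|^2$; equation \eqref{eq-f1} gives $\Delta\mathsf{x}=\frac{H''\circ\mathsf{y}}{1+(H'\circ\mathsf{y})^2}|\nabla\mathsf{y}|^2$; and the criticality equation \eqref{criticaly} gives $\Delta\mathsf{y}=-\frac{(H''\circ\mathsf{y})(H'\circ\mathsf{y})}{1+(H'\circ\mathsf{y})^2}|\nabla\mathsf{y}|^2$.

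Collecting terms and factoring out $|\nabla\mathsf{y}|^2$, the two contributions carrying the denominator $1+(H'\circ\mathsf{y})^2$ combine as
\[
\mathsf{x}\,\Delta\mathsf{x} + \mathsf{y}\,\Delta\mathsf{y} = \frac{(H''\circ\mathsf{y})\big(H\circ\mathsf{y} - \mathsf{y}\,(H'\circ\mathsf{y})\big)}{1+(H'\circ\mathsf{y})^2}\,|\nabla\mathsf{y}|^2,
\]
while the gradient terms give $|\nabla\mathsf{x}|^2+|\nabla\mathsf{y}|^2=\big(1+(H'\circ\mathsf{y})^2\big)|\nabla\mathsf{y}|^2$. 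Hence
\[
\tfrac{1}{2}\Delta(u^2) = \left[\frac{(H''\circ\mathsf{y})\big(H\circ\mathsf{y} - \mathsf{y}\,(H'\circ\mathsf{y})\big)}{1+(H'\circ\mathsf{y})^2} + \big(1+(H'\circ\mathsf{y})^2\big)\right]|\nabla\mathsf{y}|^2.
\]
The second bracketed term and $|\nabla\mathsf{y}|^2$ are manifestly nonnegative, and the first term is nonnegative precisely by assumption \eqref{cond-subh-H}, reading $\mathsf{y}\,(H'\circ\mathsf{y})$ as $(\mathrm{Id}\cdot H')\circ\mathsf{y}$ on the range of $\mathsf{y}$. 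This yields $\Delta(u^2)\geq0$ and therefore \eqref{subharm-ineq}. For the concluding sentence, a direct inspection of signs shows that $H\geq0$, $H'\leq0$, $H''\geq0$ force $H''(H-\mathrm{Id}\,H')\geq0$, so \eqref{cond-subh-H} holds in that special case.

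The argument is entirely elementary and presents no serious analytic obstacle; the only point demanding care is the bookkeeping in the third step, where the terms $\mathsf{x}\,\Delta\mathsf{x}$ and $\mathsf{y}\,\Delta\mathsf{y}$ must be combined so that their numerator collapses to exactly $(H''\circ\mathsf{y})\big(H\circ\mathsf{y}-\mathsf{y}(H'\circ\mathsf{y})\big)$, thereby matching the hypothesis \eqref{cond-subh-H}. Conceptually, the whole proof rests on recognizing the $u^2$-reduction of the first paragraph.
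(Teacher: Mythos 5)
Your proof is correct, and it reaches exactly the same final expression as the paper, namely
\begin{equation*}
|\mathsf{z}|\,\Delta|\mathsf{z}| + \big|\nabla|\mathsf{z}|\big|^2 \;=\; \frac{1}{1+(H'\circ\mathsf{y})^2}\Big\{\big(1+(H'\circ\mathsf{y})^2\big)^2 + (H''\circ\mathsf{y})\big[H\circ\mathsf{y}-\mathsf{y}\,(H'\circ\mathsf{y})\big]\Big\}\,|\nabla\mathsf{y}|^2,
\end{equation*}
but by a cleaner route. The paper computes $\nabla|\mathsf{z}|=\frac{q}{|\mathsf{z}|}\nabla\mathsf{y}$ with $q=(H'\circ\mathsf{y})(H\circ\mathsf{y})+\mathsf{y}$ and then takes the divergence of this quotient directly, which requires tracking the auxiliary quantities $q$, $s$ and $r$ and differentiating $q/|\mathsf{z}|$. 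Your reduction via $u\,\Delta u+|\nabla u|^2=\tfrac12\Delta(u^2)$ with $u^2=\mathsf{x}^2+\mathsf{y}^2$ sidesteps that quotient entirely: everything follows from $\Delta(\mathsf{x}^2+\mathsf{y}^2)=2(\mathsf{x}\Delta\mathsf{x}+|\nabla\mathsf{x}|^2+\mathsf{y}\Delta\mathsf{y}+|\nabla\mathsf{y}|^2)$ together with \eqref{criticaly} and \eqref{eq-f1}, and it makes transparent why the singularity at $u=0$ is only apparent. The bookkeeping is lighter and the appearance of the combination $H\circ\mathsf{y}-\mathsf{y}(H'\circ\mathsf{y})$ from hypothesis \eqref{cond-subh-H} is more visible. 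One small caveat, inherited from the paper rather than introduced by you: the closing remark that $H\geq0$, $H'\leq0$, $H''\geq0$ imply \eqref{cond-subh-H} uses $-\mathsf{y}\,(H'\circ\mathsf{y})\geq 0$, which requires the range of $\mathsf{y}$ to lie in $[0,\infty)$; for negative values of $\mathsf{y}$ the term $H-\mathrm{Id}\,H'$ need not be nonnegative under those sign assumptions alone.
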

\begin{proof}
	Since $f$ is a harmonic curve, we have that $\mathsf{z}=H\circ \mathsf{y} + i \mathsf{y}$ for some $C^2$ function $H$ defined on the range of $\mathsf{y}$. By direct calculation
	\begin{align*}
	\nabla|\mathsf{z}| &=\frac{1}{|\mathsf{z}|}\left[(H' \circ \mathsf{y})  (H \circ \mathsf{y}) + \mathsf{y}\right]\nabla \mathsf{y} := \frac{q}{|\mathsf{z}|} \nabla \mathsf{y}
	\end{align*}
	and
	\begin{align*}
	\Delta |\mathsf{z}| &= {\rm div}\left( \frac{q}{|\mathsf{z}|} \nabla \mathsf{y} \right) \\
	&= \big \langle \nabla \frac{q}{|\mathsf{z}|}, \nabla \mathsf{y} \big \rangle +\frac{q}{|\mathsf{z}|} \Delta \mathsf{y} \\
	& = \big \langle \frac{1}{|\mathsf{z}|} \nabla q - \frac{q}{|\mathsf{z}|^2} \nabla |\mathsf{z}|, \nabla \mathsf{y} \big\rangle -\frac{qr}{|\mathsf{z}|} \nabla |\mathsf{y}|^2\\
	&= \big \langle \frac{s}{|\mathsf{z}|}  \nabla \mathsf{y} - \frac{q^2}{|\mathsf{z}|^3} \nabla |\mathsf{y}|, \nabla \mathsf{y} \big \rangle - \frac{qr}{|\mathsf{z}|} \nabla |\mathsf{y}|^2\\
	&= \big ( \frac{s}{|\mathsf{z}|}  - \frac{q^2}{|\mathsf{z}|^3}  -\frac{qr}{|\mathsf{z}|} \big ) | \nabla \mathsf{y}|^2,
	\end{align*}
 where $$s= (H \circ \mathsf{y}) (H'' \circ \mathsf{y}) + (H' \circ \mathsf{y})^2 +1$$ and  $r$ is the function defined at \eqref{criticaly}, i.e. $r=\frac{(H''\circ \mathsf{y}) (H'\circ \mathsf{y})}{1 + (H' \circ \mathsf{y})^2 }$. It follows that
$$
|\mathsf{z}| \Delta |\mathsf{z}|+ |\nabla|\mathsf{z}||^2 = (s-qr) |\nabla \mathsf{y}|^2 \geq 0
$$
provided that $s-qr \geq 0$. The proof is completed upon noticing that this inequality holds by assumption~\eqref{cond-subh-H}, as  
\begin{align*}
s-qr&=1+ (H' \circ \mathsf{y})^2 + (H \circ \mathsf{y}) (H'' \circ \mathsf{y}) - ((H' \circ \mathsf{y})  (H \circ \mathsf{y}) + \mathsf{y})\frac{(H''\circ \mathsf{y}) (H'\circ \mathsf{y})}{1 + (H' \circ \mathsf{y})^2 } \\
&= \frac{1}{1+ (H' \circ \mathsf{y})^2} \bigg \{(1+ (H' \circ \mathsf{y})^2)^2 + (H''\circ \mathsf{y}) \big[H\circ \mathsf{y} - \mathsf{y} (H'\circ \mathsf{y})\big]\bigg\} \geq 0.
\end{align*} 
\end{proof}

It turns out that Proposition~\ref{prop-supersol} allows us to show the comparison principle for the moduli of horizontal parts of harmonic mappings.
\begin{cor} 
 Let $f$ and $g$ be harmonic curves on a domain $\Om\subset \Rn$ such that $f, g\in C^{0}(\overline{\Om})$.  Assume $|\mathsf{z}(f)|$ satisfies~\eqref{subharm-ineq}, while $|\mathsf{z}(g)|$ satisfies the opposite inequality (i.e. $\leq 0$ in~\eqref{subharm-ineq}). 
 
 If $|\mathsf{z}(f)|\leq |\mathsf{z}(g)|$ on $\partial \Om$, then $|\mathsf{z}(f)|\leq |\mathsf{z}(g)|$ in $\Om$ provided that $\mathsf{z}(f)\not=0$ and $\mathsf{z}(g)\not=0$ in $\Om$.
\end{cor}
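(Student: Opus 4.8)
The plan is to linearize the quasilinear inequality \eqref{subharm-ineq} by squaring. For any positive $C^2$ function $u$ one has the elementary identity
$$
\tfrac{1}{2}\Delta(u^2) = u\,\Delta u + |\nabla u|^2 = u\Big(\Delta u + \frac{|\nabla u|^2}{u}\Big),
$$
so that, since $u>0$, the inequality $\Delta u + |\nabla u|^2/u \geq 0$ is \emph{equivalent} to $\Delta(u^2)\geq 0$, i.e. to the subharmonicity of $u^2$. First I would apply this observation to $u:=|\mathsf{z}(f)|$, which is $C^2$ and strictly positive on $\Om$ by the standing assumption $\mathsf{z}(f)\neq 0$; since $u$ satisfies \eqref{subharm-ineq}, the function $u^2=|\mathsf{z}(f)|^2$ is subharmonic in $\Om$. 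Applying the same identity to $v:=|\mathsf{z}(g)|>0$, which by hypothesis satisfies the reversed inequality, shows that $v^2=|\mathsf{z}(g)|^2$ is superharmonic in $\Om$.

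Next I would set $w:=u^2-v^2=|\mathsf{z}(f)|^2-|\mathsf{z}(g)|^2$. Combining the two statements above gives $\Delta w = \Delta(u^2)-\Delta(v^2)\geq 0$, so $w$ is subharmonic in $\Om$ and continuous on $\overline{\Om}$ because $f,g\in C^0(\overline{\Om})$. Since $u,v\geq 0$, the boundary hypothesis $|\mathsf{z}(f)|\leq|\mathsf{z}(g)|$ on $\partial\Om$ is equivalent to $u^2\leq v^2$ there, that is $w\leq 0$ on $\partial\Om$.

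Finally I would invoke the classical weak maximum principle for subharmonic functions on the bounded domain $\Om$: a function subharmonic in $\Om$ and continuous on $\overline{\Om}$ attains its maximum on $\partial\Om$. This yields $\sup_\Om w \leq \sup_{\partial\Om} w \leq 0$, hence $u^2\leq v^2$ throughout $\Om$; taking square roots of these nonnegative quantities gives $|\mathsf{z}(f)|\leq|\mathsf{z}(g)|$ in $\Om$, as claimed.

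The argument is essentially a one-line reduction once the squaring trick is in place, so I do not expect any serious analytic obstacle. The only points demanding care are the positivity assumptions $\mathsf{z}(f),\mathsf{z}(g)\neq 0$, which guarantee that $u$ and $v$ are $C^2$ and make the passage from \eqref{subharm-ineq} to $\Delta(u^2)\geq 0$ (respectively $\Delta(v^2)\leq 0$) legitimate, together with the implicit boundedness of $\Om$, needed so that the maximum principle applies and the behavior on $\partial\Om$ controls the interior.
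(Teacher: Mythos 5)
Your proof is correct, but it takes a genuinely different route from the paper. The paper treats \eqref{subharm-ineq} as a statement about the quasilinear operator $Q(u)=\Delta u+|\nabla u|^2/u$ and invokes the comparison principle of Theorem 10.1 in Gilbarg--Trudinger, which requires verifying the structural hypotheses of that theorem --- in particular that the lower-order coefficient $b(x,u,v)=|v|^2/u$ is non-increasing in $u$ on $\R_{+}$. You instead observe that for $u>0$ the identity $\tfrac12\Delta(u^2)=u\bigl(\Delta u+|\nabla u|^2/u\bigr)$ converts \eqref{subharm-ineq} into subharmonicity of $u^2$ (and the reversed inequality into superharmonicity of $v^2$), after which the classical weak maximum principle applied to $w=u^2-v^2$ finishes the argument. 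Your substitution is more elementary and self-contained: it bypasses the quasilinear machinery entirely and makes the monotonicity condition (iii) of Theorem 10.1 unnecessary, at the cost of exploiting a linearization that is special to this particular operator. The paper's route has the advantage of uniformity --- the same appeal to Theorem 10.1 is reused in the next subsection for the component functions $\mathsf{x},\mathsf{y},\mathsf{t}$, where no such linearizing change of variable is available. Both arguments share the same implicit requirements, namely that $\Om$ be bounded (neither the statement nor the paper's proof says so explicitly, but both Theorem 10.1 and the weak maximum principle need it) and that $\mathsf{z}(f),\mathsf{z}(g)\neq 0$ so that the moduli are $C^2$ and the division by $u$ is legitimate; you flag both points correctly.
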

 \begin{proof}
 The assumptions imply that
 \[
 \Delta |\mathsf{z}(f)| + \frac{|\nabla |\mathsf{z}(f)| |^2}{|\mathsf{z}(f)|}\geq 0 \geq  \Delta |\mathsf{z}(g)| + \frac{|\nabla |\mathsf{z}(g)| |^2}{|\mathsf{z}(g)|}
 \]
 and the proposition follows from Theorem 10.1 in~\cite{gt} for the second order quasilinear operator 
 $$
 Q(u):=\Delta u+ \frac{|\nabla u|^2}{u}.
 $$
 In particular, assumptions (i), (ii) and (iv) of Theorem 10.1 in \cite{gt} trivially hold since in our case the principal part of $Q$ is the Laplacian. For the readers convenience we briefly discuss verification of the assumptions of the theorem:
 \begin{itemize}
 \item[-] $Q$ is uniformly elliptic, giving (i),
 \item[-] the principal part of $Q$ has constant coefficients independent of a solution, hence (ii) holds,
 \item[-] the lower order terms coefficient $b$ of the operator $Q$, defined on $\Om\times\R\times \Rn$ as $b(x,u,v) =|v|^2/u$, is continuously differentiable with respect to the $v$ variable, thus (iv) holds.
 \end{itemize}
   For condition (iii) in assumptions of Theorem 10.1, we note that function $b$ is a non-increasing function of $u \in \R_{+}$ for each fixed $(x,v) \in \Om\times \Rn$, as we assume that $u\geq 0$. 
\end{proof}

\subsection{The comparison and maximum principles, the Harnack inequality}

As in the previous section, Theorem 10.1 in~\cite{gt} can be applied to the operator $Qu=\Delta u+ r(\mathsf{y}, H', H'') |\nabla u|^2$ to give a comparison principle for the component functions of harmonic curves.

\begin{prop} \label{comp-pr}
 Let $f=(\mathsf{x}(f),\mathsf{y}(f),\mathsf{t}(f))$ and $g=(\mathsf{x}(g),\mathsf{y}(g),\mathsf{t}(g))$ be harmonic curves on a domain $\Om \subset \R^n$ corresponding to a given $H$ and such that $f,g \in C^{0}(\overline{\Om})$. If $\mathsf{y}(f)\leq \mathsf{y}(g)$ on $\partial \Om$, then $\mathsf{y}(f)\leq \mathsf{y}(g)$ in $\Om$ provided that $H$ satisfies the following ODI on the range of $\mathsf{y}(f)$:
 \begin{equation}\label{cond-comp2}
  H''' H'(1+(H')^2)+(H'')^2(1-(H')^2)\leq 0.
 \end{equation}
 Moreover, if $H$ is additionally non-increasing then also $\mathsf{x}(f)\leq \mathsf{x}(g)$ on $\partial \Om$ implies that $\mathsf{x}(f)\leq \mathsf{x}(g)$ in $\Om$. 
\end{prop}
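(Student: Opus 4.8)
The plan is to observe that $\mathsf{y}(f)$ and $\mathsf{y}(g)$ solve one and the same quasilinear elliptic equation and then to quote the comparison principle, Theorem 10.1 in \cite{gt}, in the same manner as in the preceding corollary. By \eqref{criticaly}, any component $\mathsf{y}$ of a harmonic curve associated with $H$ is a solution of $Qu=0$, where
\[
 Qu := \Delta u + r(u)\,|\nabla u|^2, \qquad r(s) = \frac{H'(s)\,H''(s)}{1 + H'(s)^2}.
\]
Both $\mathsf{y}(f)$ and $\mathsf{y}(g)$ lie in $C^2(\Om)\cap C^0(\overline{\Om})$, so the task reduces to verifying the structural hypotheses of Theorem 10.1. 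Hypotheses (i), (ii) and (iv) hold trivially because the principal part of $Q$ is the Laplacian (uniformly elliptic, with coefficients independent of the solution) and because the lower-order term $b(x,u,p)=r(u)|p|^2$ depends smoothly on $p$. First I would dispose of these exactly as in the proof of the corollary for $Q(u)=\Delta u+|\nabla u|^2/u$ above.

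The heart of the argument is hypothesis (iii): that $b$ be non-increasing in the $u$-variable, i.e. that $r'\le 0$ on the range of $\mathsf{y}(f)$. Since $\partial_u b = r'(u)\,|p|^2$, I would compute $r'$ directly and check that the assumed ODI \eqref{cond-comp2} is precisely this condition. A differentiation of $r$ gives
\[
 r'(s) = \frac{H'''(s)\,H'(s)\big(1 + H'(s)^2\big) + H''(s)^2\big(1 - H'(s)^2\big)}{\big(1 + H'(s)^2\big)^2},
\]
whose numerator is exactly the left-hand side of \eqref{cond-comp2}. Thus \eqref{cond-comp2} is equivalent to $r'\le 0$, securing (iii). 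With $Q\mathsf{y}(f)=0=Q\mathsf{y}(g)$ and $\mathsf{y}(f)\le\mathsf{y}(g)$ on $\partial\Om$, Theorem 10.1 in \cite{gt} then delivers $\mathsf{y}(f)\le\mathsf{y}(g)$ in $\Om$. I note that the roles of $f$ and $g$ are interchangeable, so this comparison in fact preserves orderings in both directions.

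For the final assertion about $\mathsf{x}$, the plan is to transfer the ordering through the relation $\mathsf{x}=H\circ\mathsf{y}$. When $H$ is non-increasing it reverses order, so the boundary hypothesis $\mathsf{x}(f)\le\mathsf{x}(g)$, that is $H(\mathsf{y}(f))\le H(\mathsf{y}(g))$ on $\partial\Om$, corresponds to $\mathsf{y}(f)\ge\mathsf{y}(g)$ on $\partial\Om$. Applying the $\mathsf{y}$-comparison just proved, now with the roles of $f$ and $g$ swapped, propagates this to $\mathsf{y}(f)\ge\mathsf{y}(g)$ throughout $\Om$; composing once more with the non-increasing $H$ returns $\mathsf{x}(f)\le\mathsf{x}(g)$ in $\Om$.

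I expect the main obstacle to be the bookkeeping in hypothesis (iii): recognizing that the somewhat opaque ODI \eqref{cond-comp2} is nothing but the monotonicity $r'\le 0$ demanded by Theorem 10.1, which only becomes transparent after the differentiation of $r$ above. The $\mathsf{x}$-part is then routine, the only delicate point being the order-reversal, where passing from the boundary ordering of $\mathsf{x}$ to that of $\mathsf{y}$ implicitly relies on the strict monotonicity of $H$.
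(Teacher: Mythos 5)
Your argument for the $\mathsf{y}$-component is exactly the paper's: both $\mathsf{y}(f)$ and $\mathsf{y}(g)$ solve $\Delta u + r(u)|\nabla u|^2=0$ with $r=H'H''/(1+(H')^2)$, and Theorem 10.1 of \cite{gt} applies once condition (iii) is identified with $r'\leq 0$; your differentiation of $r$ is correct and its numerator is precisely the left-hand side of \eqref{cond-comp2}. For the $\mathsf{x}$-component the paper offers only the one-line remark that $\mathsf{x}=H\circ\mathsf{y}$, and your reconstruction---reversing the boundary ordering through $H$, applying the $\mathsf{y}$-comparison with the roles of $f$ and $g$ swapped, and composing with $H$ again---is the natural way to make that precise; you are moreover right to flag that passing from $H(\mathsf{y}(f))\leq H(\mathsf{y}(g))$ on $\partial\Om$ back to $\mathsf{y}(f)\geq\mathsf{y}(g)$ requires $H$ to be \emph{strictly} decreasing (a merely non-increasing, e.g.\ constant, $H$ carries no such information), a point the paper's terse justification glosses over.
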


 \begin{proof}
 As in the previous proof, we appeal to Theorem 10.1 in~\cite{gt}. Again, the key condition to be checked is (iii), meaning here that the function 
$$
b(x,u,v):=\frac{H'(u)H''(u)}{1+(H'(u))^2}|v|^2
$$
 is a non-increasing function of $u \in \R$ for any fixed $(x,v) \in \Om\times \Rn$. Upon computing when the derivative $\frac{d}{du} b(\cdot,u,\cdot)\leq 0$, we arrive at \eqref{cond-comp2}. 

Since $\mathsf{x}=H \circ \mathsf{y}$, the second assertion follows as well provided that $H$ is non-increasing.
\end{proof}

It turns out that the comparison principle holds for the $\mathsf{t}$ component function of a map as well. Indeed, by \eqref{harmonic-t} we define the following function $b:\Om\times \R \times \Rn \to \R$:
\begin{equation}
 b(x,u,v):=2\frac{\left(u+H(u)H'(u)\right) H''(u)} {1 + (H'(u))^2} |v|^2.  \label{t-coord-b}
\end{equation}
As in the last proof, we use Theorem 10.1 in~\cite{gt} and check its condition (iii) to verify that $b$ is a non-increasing function of $u \in \R$ for any fixed $(x,v) \in \Om\times \Rn$ provided that the following inequality holds:
\begin{equation}
  H'''(u+HH')(1+(H')^2)+(H'')^2(1-(H')^2)+H''(1+(H')^2-2uH') \leq 0. \label{cond-comp-pr-t}
\end{equation}

For the next result on the uniqueness of harmonic curves with the same boundary data we need to investigate if the conditions for the comparison principle to hold, are met for all component functions $\mathsf{x}, \mathsf{y}$ and 
$\mathsf{t}$ (the discussion on existence of harmonic curves is postponed till Proposition~\ref{prop-exist}). This amounts to checking that there are such $C^3$ functions $H$ so that conditions~\eqref{cond-comp2}, \eqref{cond-comp-pr-t} and $H'\leq 0$ hold together. The full discussion involves solving the system of three differential inequalities and leads to tedious and lengthy computations which do not fit into the scope of the manuscript. Instead we present the following example.

\begin{ex}
 Suppose that $H'''\equiv 0$ and so $H(u)=au^2+bu+c$ is a quadratic function and $a,b,c\in \R$. Then \eqref{cond-comp2} reads 
\[
(H'')^2(1-(H')^2)\leq 0\,\Leftrightarrow\, 1-(H')^2 \leq 0 \,\Leftrightarrow\, H'\leq -1\quad \hbox{(by the assumption that $H'\leq 0$)}.
\]
However, then \eqref{cond-comp-pr-t} takes the form ($H'''\equiv 0$)
\[
(H'')^2(1-(H')^2)+H''(1+(H')^2-2uH') \leq H''(1+(H')^2-2uH')
\]
and the latter expression is non-positive (under the condition $H'\leq -1$), for instance when $H''\leq 0$ and $1+(H')^2-2uH'\geq 0$ for all $u$. The latter two conditions hold for $a\leq 0$, provided that $b^2\leq 4a(a-1)$. Moreover, $H'\leq -1 \,\Leftrightarrow\, u\geq \frac{b+1}{-2a}$ and, therefore, it suffices to assume that 
 \begin{equation*}
   a\leq 0, \quad b^2\leq 4a(a-1), \quad b\leq -1-2a \min\{\inf_{\overline{\Om}}\mathsf{x}, \inf_{\overline{\Om}}\mathsf{y}, \inf_{\overline{\Om}}\mathsf{t}\},\quad c\in \R
 \end{equation*}
  for a quadratic function $H$ to imply the comparison principle for all component functions of a given $C^2$ harmonic curve.
 \end{ex}
 
\begin{cor}[Uniqueness of solutions]\label{cor-unique}
 Let $f$ and $g$ be a $C^2$-harmonic mappings defined on a domain $\Om\subset \Rn$, with the same boundary data in $C^0(\partial \Om)$. Moreover, let function $H$ be $C^3$, non-increasing and satisfy~\eqref{cond-comp2} and~\eqref{cond-comp-pr-t}. Then $f\equiv g$ in $\overline{\Om}$.
\end{cor}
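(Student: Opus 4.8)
The plan is to reduce the uniqueness of the full map $f=(\mathsf{x},\mathsf{y},\mathsf{t})$ to the uniqueness of its $\mathsf{y}$-component, and then recover the remaining two components from the structural relations $\mathsf{x}=H\circ\mathsf{y}$ and the contact equation~\eqref{nablat}. First I would record that, since $f$ and $g$ share the same boundary data in $C^0(\partial\Om)$, we have in particular $\mathsf{y}(f)=\mathsf{y}(g)$ on $\partial\Om$, so that \emph{both} inequalities $\mathsf{y}(f)\le\mathsf{y}(g)$ and $\mathsf{y}(g)\le\mathsf{y}(f)$ hold (trivially, as equalities) on $\partial\Om$.

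Next, because $H$ is non-increasing and satisfies~\eqref{cond-comp2}, Proposition~\ref{comp-pr} applies to the pair $(f,g)$ and yields $\mathsf{y}(f)\le\mathsf{y}(g)$ in $\Om$. The hypotheses of Proposition~\ref{comp-pr} are symmetric in the two curves — both are harmonic curves corresponding to the same $H$, and $H$ is assumed to satisfy the required conditions on the relevant range — so exchanging the roles of $f$ and $g$ furnishes the reverse inequality $\mathsf{y}(g)\le\mathsf{y}(f)$ in $\Om$. Combining the two gives $\mathsf{y}(f)\equiv\mathsf{y}(g)$ throughout $\Om$, and by continuity throughout $\overline{\Om}$.

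With the $\mathsf{y}$-components identified, the relation $\mathsf{x}=H\circ\mathsf{y}$ forces $\mathsf{x}(f)=H\circ\mathsf{y}(f)=H\circ\mathsf{y}(g)=\mathsf{x}(g)$ at once; equivalently, one may invoke the second assertion of Proposition~\ref{comp-pr} in both directions, using $H'\le0$. For the vertical component I would use the contact equation~\eqref{nablat}, which expresses $\nabla\mathsf{t}$ solely in terms of $\mathsf{y}$ and $H$: since $\mathsf{y}(f)\equiv\mathsf{y}(g)$, we obtain $\nabla\big(\mathsf{t}(f)-\mathsf{t}(g)\big)\equiv0$, so $\mathsf{t}(f)-\mathsf{t}(g)$ is constant on the connected set $\Om$, and this constant vanishes because the two functions agree on $\partial\Om$. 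Alternatively, and this is where condition~\eqref{cond-comp-pr-t} enters, one applies the comparison principle for the $\mathsf{t}$-operator with lower-order term~\eqref{t-coord-b} in both directions to reach the same conclusion. Either way $f\equiv g$ in $\overline{\Om}$.

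The only genuinely delicate point is the symmetry of the comparison step: one must make sure that the structural conditions~\eqref{cond-comp2} and~\eqref{cond-comp-pr-t} on $H$ are available on the ranges of \emph{both} $\mathsf{y}(f)$ and $\mathsf{y}(g)$, and not merely on the range of one of them, so that Proposition~\ref{comp-pr} may legitimately be applied with the roles of $f$ and $g$ interchanged. This is guaranteed by reading the hypothesis on $H$ as holding on the whole interval relevant to the problem. The remaining steps are essentially bookkeeping: the reduction to the $\mathsf{y}$-component and the passage through~\eqref{nablat} for $\mathsf{t}$ are immediate once the two-sided comparison for $\mathsf{y}$ is in place.
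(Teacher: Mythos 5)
Your proposal is correct and follows essentially the same route as the paper: a two-sided application of Proposition~\ref{comp-pr} to identify the $\mathsf{y}$-components, the relation $\mathsf{x}=H\circ\mathsf{y}$ for the horizontal component, and the contact equation~\eqref{nablat} (together with agreement on $\partial\Om$) for the vertical component. The paper's own proof is exactly this two-line reduction, so no further comparison is needed.
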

The proof follows immediately from Proposition~\ref{comp-pr} applied to the first and second component functions of given harmonic mappings. This in turn together with the contact condition implies the uniqueness for the third component function. 

Similarly, we show the strong maximum principle.

\begin{prop}[strong maximum principle] 
 Let $f=(\mathsf{x}(f),\mathsf{y}(f),\mathsf{t}(f))$ be a $C^2$-harmonic mapping on a domain $\Om\subset \Rn$. If $\mathsf{y}(f)(x_0)=\sup_{\overline{\Om}} \mathsf{y}(f)$ at some $x_0 \in \Om$, then $\mathsf{y}(f)=const$ in $\Om$ provided that $H$ satisfies the following condition:
 \begin{equation}\label{cond-str-max}
 \sup_\Omega \left |\frac{(H''\circ \mathsf{y}) (H'\circ \mathsf{y})}{1 + (H' \circ \mathsf{y})^2 }\right | < C.
 \end{equation}
In such a case, also component functions $\mathsf{x}(f)$ and $\mathsf{t}(f)$ are constant in $\Om$. 
\end{prop}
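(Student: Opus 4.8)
The plan is to reduce the Euler--Lagrange equation governing $\mathsf{y}:=\mathsf{y}(f)$ to a \emph{linear}, uniformly elliptic equation with no zeroth-order term, and then invoke the classical Hopf strong maximum principle. Recall from \eqref{criticaly} that $\mathsf{y}$ satisfies
\[
\Delta \mathsf{y} + r(\mathsf{y}, H', H'')\,|\nabla \mathsf{y}|^2 = 0, \qquad r(\mathsf{y}, H', H'') = \frac{(H''\circ \mathsf{y})(H'\circ \mathsf{y})}{1+(H'\circ \mathsf{y})^2}.
\]
The key observation is that the quadratic gradient nonlinearity can be absorbed into a linear first-order drift term: writing $r|\nabla\mathsf{y}|^2 = \langle r\nabla\mathsf{y},\nabla\mathsf{y}\rangle$ and setting $b(x):=r(\mathsf{y}(x),H',H'')\,\nabla\mathsf{y}(x)$, the equation becomes
\[
L\mathsf{y} := \Delta \mathsf{y} + \langle b,\nabla \mathsf{y}\rangle = 0 \quad \text{in } \Om,
\]
where $L$ is uniformly elliptic with principal part the Laplacian (ellipticity constant $\lambda=1$) and, crucially, with vanishing zeroth-order coefficient.

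Next I would check that $b$ meets the hypotheses of the strong maximum principle. Since $f$ is $C^2$, the gradient $\nabla\mathsf{y}$ is continuous and hence locally bounded on $\Om$; combined with the bound on $r$ supplied by condition \eqref{cond-str-max}, this gives $|b|=|r|\,|\nabla\mathsf{y}|$ locally bounded. With $L$ uniformly elliptic, $c\equiv 0$, and $b$ locally bounded, Theorem 3.5 in \cite{gt} applies to $\mathsf{y}$, which is in particular a subsolution ($L\mathsf{y}=0\geq 0$). Because $\mathsf{y}$ attains its supremum $M=\sup_{\overline{\Om}}\mathsf{y}$ at the interior point $x_0\in\Om$, the standard open--closed propagation argument applies: the set $\{\mathsf{y}=M\}$ is relatively closed by continuity, and is open because the Hopf boundary-point lemma, applied on interior balls touching that set, would otherwise force $\partial\mathsf{y}/\partial\nu>0$ at a point where $\nabla\mathsf{y}$ must vanish. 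Connectedness of $\Om$ then yields $\mathsf{y}\equiv M$.

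Finally, once $\mathsf{y}$ is constant, so is $\mathsf{x}=H\circ\mathsf{y}$, and the contact equation \eqref{nablat} gives $\nabla\mathsf{t}=2(\mathsf{y}\nabla\mathsf{x}-\mathsf{x}\nabla\mathsf{y})=0$ since $\nabla\mathsf{x}=\nabla\mathsf{y}=0$; hence $\mathsf{t}$, and therefore $f$, is constant as well. The main point of the argument --- and the precise role of hypothesis \eqref{cond-str-max} --- is the reduction to a zeroth-order-free linear operator with a controllable drift coefficient; I expect this linearization step to be the only conceptual obstacle, since everything after it is routine Hopf machinery. A minor point to address is that the drift bound is genuinely needed only locally near each point where the boundary-point lemma is invoked, so that the $C^2$ regularity of $f$ together with the boundedness of $r$ from \eqref{cond-str-max} is exactly what makes the propagation go through on all of $\Om$.
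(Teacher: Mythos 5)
Your argument is correct, but it takes a genuinely different route from the paper. The paper applies the quasilinear strong maximum principle of Pucci--Serrin (Theorem 5.3.1 in \cite{puse-book}) directly to \eqref{criticaly}, verifying the structure conditions (B1) and (F2) with $B(x,z,\xi)=\frac{H'(z)H''(z)}{1+(H'(z))^2}|\xi|^2$, $\Phi(|\xi|)=|\xi|^2$, $\kappa=C$ and $f\equiv 0$; condition \eqref{cond-str-max} enters precisely as the uniform bound $\kappa$. You instead freeze the given solution inside the gradient nonlinearity, rewriting $r(\mathsf{y},H',H'')|\nabla\mathsf{y}|^2$ as $\langle b,\nabla\mathsf{y}\rangle$ with $b=r\,\nabla\mathsf{y}$, so that $\mathsf{y}$ solves a linear uniformly elliptic equation with locally bounded drift and no zeroth-order term, to which the classical Hopf strong maximum principle (Theorem 3.5 in \cite{gt}) applies. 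This linearization is legitimate because the maximum principle is only being applied to the one fixed $C^2$ solution, and the subsequent open--closed propagation argument is exactly the proof of that theorem. What your approach buys is economy and a sharper hypothesis: since $H$ is $C^2$ and $\mathsf{y}$ is continuous, the coefficient $r$ is automatically continuous, hence locally bounded, so \eqref{cond-str-max} is in fact not needed at all in your argument --- only the local boundedness of $b$, which follows from the $C^2$ regularity of $f$ alone. The paper's route, by contrast, is stated for general quasilinear operators and uses the uniform bound as given. The concluding step for $\mathsf{x}(f)=H\circ\mathsf{y}(f)$ and for $\mathsf{t}(f)$ via \eqref{nablat} coincides with the paper's.
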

\begin{proof}
 We apply the strong maximum principle in Theorem 5.3.1 in~\cite{puse-book} to the component function $\mathsf{y}(f)$ and, in notation of~\cite{puse-book}, with $B(x,z,\xi):=\frac{H'(z)\,H''(z)}{1+(H'(z))^2}|\xi|^2$.  Thus, we need to verify that \eqref{cond-str-max} implies assumptions (B1) and (F2) on pg. 107 in~\cite{puse-book}. Indeed:
\medskip
\\
\indent (B1) reads $B(x,z,\xi)\geq -\kappa \Phi(|\xi|)-f(z)$ which in our case holds with $\kappa:=C$ for $C$ as in~\eqref{cond-str-max}, $\Phi(|\xi|):=|\xi|^2$ and $f\equiv 0$,
\smallskip
\\
\indent (F2) requires that $f(0)=0$ and $f$ is non-decreasing on some interval and for us holds trivially, as $f\equiv 0$.
\medskip
\\
 Thus, we obtain that $\mathsf{y}(f)=const$ in $\Om$.  Since $\mathsf{x}(f)=H\circ \mathsf{y}$ and, moreover, $\mathsf{t}(f)$ satisfies the contact condition, we infer that also $\mathsf{x}(f)$ and $\mathsf{t}(f)$ must be constant under assumptions of the proposition.
\end{proof}

%
%
%

We close this section with the proof of the Harnack inequality for component functions of a harmonic curve.
\begin{prop} 
 Let $f=(\mathsf{x}(f),\mathsf{y}(f),\mathsf{t}(f))$ be a $C^2$-harmonic mapping defined on a domain $\Om\subset \Rn$ such that~\eqref{cond-str-max} holds for some $C>0$ and $\mathsf{y}(f) \geq 0$ in $\Om$. Then $\mathsf{y}(f)$ satisfies the Harnack inequality in any ball $B_r\subset \Om$:
\[
 \sup_{B_r} \mathsf{y}(f)\leq c \inf_{B_r} \mathsf{y}(f),
\] 
where the positive constant $c=c(n, C, \|\mathsf{y}(f)\|_{L^{\infty}(\Om)})$. Moreover, the analogous Harnack inequality holds for $\mathsf{x}(f)$ and $\mathsf{t}(f)$.
 \end{prop}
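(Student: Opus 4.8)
The plan is to linearise the quasilinear equation \eqref{criticaly} by a change of the dependent variable that turns $\mathsf{y}:=\mathsf{y}(f)$ into a nonnegative harmonic function, and then invoke the classical Harnack inequality for harmonic functions. The starting point is the observation already made after \eqref{est-Cac-cond}, namely that the coefficient in \eqref{criticaly} is a logarithmic derivative,
$$
r(\mathsf{y},H',H'')=\frac{(H''\circ \mathsf{y})(H'\circ \mathsf{y})}{1+(H'\circ \mathsf{y})^2}=\frac12\frac{\ud}{\ud s}\ln\!\big(1+(H'(s))^2\big)\Big|_{s=\mathsf{y}},
$$
so that condition \eqref{cond-str-max} says precisely that $s\mapsto\ln(1+(H'(s))^2)$ is Lipschitz with constant $2C$ on the range of $\mathsf{y}$.

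First I would set $\Psi(s):=\int_0^s\sqrt{1+(H'(\tau))^2}\,\ud\tau$, the arclength of the generating curve $\tau\mapsto(H(\tau),\tau)$, and put $v:=\Psi\circ \mathsf{y}$. A one-line check shows $\Psi'=\sqrt{1+(H')^2}$ satisfies $\Psi''=r\,\Psi'$ on the range of $\mathsf{y}$, so the chain rule together with \eqref{criticaly} yields
$$
\Delta v=(\Psi''\circ \mathsf{y})\,|\nabla \mathsf{y}|^2+(\Psi'\circ \mathsf{y})\,\Delta \mathsf{y}=\big((\Psi''-r\,\Psi')\circ \mathsf{y}\big)\,|\nabla \mathsf{y}|^2=0.
$$
Hence $v$ is harmonic on $\Om$; since $\mathsf{y}\geq0$ and $\Psi$ is strictly increasing with $\Psi(0)=0$, we have $v\geq0$ throughout $\Om$. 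The classical interior Harnack inequality then gives $\sup_{B_r}v\leq c_n\inf_{B_r}v$ for every ball with $\overline{B_{2r}}\subset\Om$, with $c_n$ depending only on $n$.

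Next I would transfer this back to $\mathsf{y}$ by comparing $\Psi$ with the identity. The Lipschitz bound forces $\Psi'(s_1)/\Psi'(s_2)\leq e^{2C\|\mathsf{y}\|_{L^\infty(\Om)}}$ for $s_1,s_2$ in the range of $\mathsf{y}$, so there exist constants $0<a\leq b$ with $b/a=e^{2C\|\mathsf{y}\|_{L^\infty(\Om)}}$ and $a\,s\leq\Psi(s)\leq b\,s$ on that range. As $\Psi$ is strictly increasing, $\sup_{B_r}v=\Psi(\sup_{B_r}\mathsf{y})$ and $\inf_{B_r}v=\Psi(\inf_{B_r}\mathsf{y})$, whence $a\sup_{B_r}\mathsf{y}\leq\Psi(\sup_{B_r}\mathsf{y})\leq c_n\,\Psi(\inf_{B_r}\mathsf{y})\leq c_n\,b\inf_{B_r}\mathsf{y}$, that is $\sup_{B_r}\mathsf{y}\leq c_n e^{2C\|\mathsf{y}\|_{L^\infty(\Om)}}\inf_{B_r}\mathsf{y}$. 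I would stress that the undetermined factor $\Psi'(0)$ cancels in the ratio $b/a$, so the final constant depends only on $n$, $C$ and $\|\mathsf{y}(f)\|_{L^\infty(\Om)}$, as claimed. For $\mathsf{x}(f)$ and $\mathsf{t}(f)$ I would run the same linearisation on their equations \eqref{eq-f1} and \eqref{harmonic-t}: rewriting their right-hand sides through $|\nabla\mathsf{x}|^2$, respectively $|\nabla\mathsf{t}|^2$, puts them in natural-growth form, and the associated integrating factor again reduces each to a nonnegative harmonic function, to which the transfer argument applies verbatim.

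The main obstacle I anticipate is precisely the last step, i.e.\ handling $\mathsf{x}(f)$ and $\mathsf{t}(f)$ uniformly. Although $\mathsf{x}=H\circ \mathsf{y}$ and $\mathsf{t}=G\circ \mathsf{y}$ (with $G'=2(\mathrm{Id}\,H'-H)$ by \eqref{nablat}) are again functions of $\mathsf{y}$, Harnack for $\mathsf{y}$ alone does \emph{not} transfer to them automatically: the comparability of the corresponding integrating factor with the identity requires the analogue of \eqref{cond-str-max} for the coefficients of \eqref{eq-f1} and \eqref{harmonic-t}, together with $\mathsf{x},\mathsf{t}\geq0$ (and, in the conversion to natural-growth form, the non-vanishing of $H'$, respectively $G'$). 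This is the content of hypothesis \eqref{cond-str-max} being posited for \emph{some} $C>0$ on each component. A secondary technical point is that the harmonic Harnack inequality is naturally applied on concentric balls $B_r\subset\overline{B_{2r}}\subset\Om$ and, if one wants it on an arbitrary $B_r\Subset\Om$, must be chained along a finite covering, which only affects the constant through its dependence on $n$.
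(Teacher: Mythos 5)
Your argument is correct in substance but follows a genuinely different route from the paper. The paper treats \eqref{criticaly} as a quasilinear equation with natural growth in the gradient, checks the structure conditions $|A(x,z,\xi)|\le|\xi|$, $|B(x,z,\xi)|\le c|\xi|^2$ using \eqref{cond-str-max}, and invokes the Serrin-type Harnack inequality (Theorem 3.14 in Mal\'y--Ziemer) as a black box; for $\mathsf{x}(f)$ it instead views \eqref{eq-f1} as a Poisson equation $\Delta\mathsf{x}=\tilde B(x)$ with locally bounded right-hand side and cites Pucci--Serrin, and for $\mathsf{t}(f)$ it cites Mal\'y--Ziemer again. You instead make explicit the exponential change of dependent variable that underlies that general theory: since the coefficient in \eqref{criticaly} is the exact logarithmic derivative of $\sqrt{1+(H')^2}$, the substitution $v=\Psi\circ\mathsf{y}$ with $\Psi'=\sqrt{1+(H')^2}$ kills the quadratic gradient term, reduces everything to the classical Harnack inequality for nonnegative harmonic functions, and produces a transparent constant $c_n e^{2C\|\mathsf{y}\|_{L^\infty(\Om)}}$ of exactly the dependence claimed. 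This is more elementary and self-contained; what the paper's route buys is robustness (only boundedness of the coefficient is used, not its exactness, and the $\mathsf{x}$-component is handled without assuming $H'\neq0$). Two small points to patch. First, \eqref{cond-str-max} controls $\ln\Psi'$ only on the range of $\mathsf{y}$, whereas your bound $a s\le\Psi(s)\le b s$ needs control on all of $[0,\sup\mathsf{y}]$; if $\inf_\Om\mathsf{y}>0$ you should extend $H'$ constantly below $\inf_\Om\mathsf{y}$ (this keeps the coefficient bounded by $C$ since $H''=0$ there) before defining $\Psi$ as an integral from $0$. Second, your honest caveat about $\mathsf{x}(f)$ and $\mathsf{t}(f)$ is well taken: the multiplicative Harnack inequality for these components does require their nonnegativity and a bound on the corresponding coefficients (and, in your conversion to natural-growth form, the nonvanishing of $H'$ and of $G'=2(\mathrm{Id}\,H'-H)$), none of which follows from \eqref{cond-str-max} alone; the paper glosses over the same issue, so this is a defect of the statement rather than of your proof.
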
 
\begin{proof}
 We employ results presented in~\cite{mz}. Note that \eqref{criticaly}, under the assumption~\eqref{cond-str-max}, 
 written in the divergence form satisfies the (standard) growth assumptions for $(A,B)$-harmonic equations as in (3.5), see pg. 162 in~\cite{mz}, i.e. for $A(x, z, \xi)=\xi$ and $B(x, z, \xi)=\frac{H'(z)\,H''(z)}{1+(H'(z))^2}|\xi|^2$ it holds that
 \[
  |A(x, z, \xi)|\leq |\xi|,\quad |B(x, z, \xi)|\leq c |\xi|^2.
 \]
 Therefore, Theorem 3.14 in~\cite{mz} applies with $k(r)\equiv 0$ and a positive constant $C$ as in Theorem 3.13 in \cite{mz} such that $c=c(n, C, \|\mathsf{y}(f)\|_{L^{\infty}(B_r)})$, giving the assertion, as the $C^2$-function $\mathsf{y}(f)$ is locally bounded on any domain in $\Rn$.
 
 In order to show the Harnack estimate for $\mathsf{x}(f)$ we note that by~\eqref{eq-f1} in any compact subset of $\Om$  it holds that $\Delta \mathsf{x}(f)=\tilde{B}(x,\mathsf{y}(f), \nabla \mathsf{y}(f))$ for a locally bounded function 
 $$
 \tilde{B}(x)=\frac{H''\big(\mathsf{y}(f)(x)\big)}{1+\big(H'(\mathsf{y}(f)(x))\big)^2}|\nabla \mathsf{y}(f)(x)|^2.
 $$
The function $\tilde{B}$ depends only on a point $x\in \Om$, as the function $\mathsf{y}(f)$ is a given solution to~\eqref{criticaly}. The local boundedness of $\tilde{B}$ is trivial consequence of $C^2$-regularity of functions $\mathsf{y}(f)$ and $H$. Thus, we may apply e.g. Theorem 7.2.1 in~\cite{puse-book} to obtain the Harnack inequality for $\mathsf{x}(f)$.

Finally, the Harnack estimate for the component function $\mathsf{t}(f)$ follows by the reasoning analogous to the above one for $\mathsf{y}(f)$. Namely, as $\mathsf{t}(f)$ satisfies~\eqref{harmonic-t}, we may apply again Theorem 3.14 in~\cite{mz} with operator $b$ as defined in~ \eqref{t-coord-b}. 
\end{proof}

\subsection{The existence of solutions}

Recall that Theorem 3.8 in~\cite{cl} gives us the existence of the Sobolev minimizers to the Dirichlet problem subject the Sobolev boundary data in the sense of traces, cf.~\cite[Definition 3.1 ]{cl}. Moreover, Theorems 3.2 and 3.3 in~\cite{cl} provide equivalent Dirichlet problems in terms of the horizontal energy $\int |\nabla \mathsf{z}|^2$, i.e. with respect to the horizontal part of the Jacoby matrix of a harmonic map $f=(\mathsf{z},\mathsf{t})$. However, in this work we study a slightly different class of mappings, namely those which are the critical points of the horizontal energy and are $C^2$ regular.

\begin{prop}\label{prop-exist}
 Let $\Om\subset \Rn$ be a domain such that $\partial \Om$ satisfies the exterior sphere condition everywhere. Then for any function $g \in C^{0}(\partial \Om, \R)$ and a $C^3$ function $H$ satisfying~\eqref{cond-comp2} there exists a unique solution $\mathsf{y}\in C^0(\overline{\Om})\cap C^2(\Om)$ to the following Dirichlet problem (cf. ~\eqref{criticaly})
\begin{equation*}
 \begin{cases}
\Delta \mathsf{y}  = - \frac{(H''\circ \mathsf{y}) (H'\circ \mathsf{y})}{1 + (H' \circ \mathsf{y})^2 } |\nabla \mathsf{y}|^2 \quad\hbox{ in }\Om\\
\mathsf{y}=g \quad \hbox{ on }\partial \Om.
\end{cases}
\end{equation*} 
\end{prop}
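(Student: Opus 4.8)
The plan is to linearize equation~\eqref{criticaly} by changing the dependent variable to the arc-length of the profile curve $s\mapsto(H(s),s)$. Fix an interval $I\subset\R$ containing the range of the boundary datum $g$ and on which $H$ is $C^3$, pick $s_0\in I$, and set
\[
 \Phi(s):=\int_{s_0}^{s}\sqrt{1+(H'(\tau))^2}\,\ud\tau .
\]
Since $H\in C^3$, the integrand is $C^2$ and bounded below by $1$, so $\Phi$ is a strictly increasing $C^3$ diffeomorphism of $I$ onto $\Phi(I)$ with $C^3$ inverse. The reason for this choice is the identity
\[
 \frac{\Phi''}{\Phi'}=\tfrac12\big(\ln(1+(H')^2)\big)'=\frac{H'H''}{1+(H')^2},
\]
i.e.\ $\Phi''/\Phi'$ is exactly the coefficient $r(\mathsf{y},H',H'')$ appearing in~\eqref{criticaly}.

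First I would pass to the Laplacian. Writing $v=\Phi\circ\mathsf{y}$ gives $\nabla v=(\Phi'\circ\mathsf{y})\nabla\mathsf{y}$ and $\Delta v=(\Phi''\circ\mathsf{y})|\nabla\mathsf{y}|^2+(\Phi'\circ\mathsf{y})\Delta\mathsf{y}$; inserting~\eqref{criticaly} and using $\Phi''=r\,\Phi'$ shows that $\mathsf{y}\in C^2(\Om)\cap C^0(\overline{\Om})$ solves the Dirichlet problem if and only if $v$ is harmonic in $\Om$ with boundary data $\tilde g:=\Phi\circ g\in C^0(\partial\Om)$.

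Next I would solve this linear problem. Since $\partial\Om$ satisfies the exterior sphere condition at every point, each boundary point is regular (the exterior sphere provides a barrier), so the classical Perron method yields a unique $v\in C^0(\overline{\Om})\cap C^\infty(\Om)$, harmonic in $\Om$, with $v=\tilde g$ on $\partial\Om$. By the maximum principle $\Phi(\inf_{\partial\Om}g)\le v\le\Phi(\sup_{\partial\Om}g)$, so $\mathsf{y}:=\Phi^{-1}\circ v$ takes values in $I$ and is well defined; as $\Phi^{-1}\in C^3$ and $v\in C^\infty(\Om)\cap C^0(\overline{\Om})$, we obtain $\mathsf{y}\in C^2(\Om)\cap C^0(\overline{\Om})$ with $\mathsf{y}=\Phi^{-1}(\Phi\circ g)=g$ on $\partial\Om$, and reading the computation above backwards shows that $\mathsf{y}$ solves~\eqref{criticaly}. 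This gives existence.

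For uniqueness the same substitution suffices: if $\mathsf{y}_1,\mathsf{y}_2$ both solve the problem then $v_i:=\Phi\circ\mathsf{y}_i$ are harmonic with the common boundary data $\tilde g$, whence $v_1=v_2$ and $\mathsf{y}_1=\mathsf{y}_2$; alternatively, uniqueness is precisely the comparison principle of Proposition~\ref{comp-pr}, whose hypothesis is the assumed condition~\eqref{cond-comp2}. The only delicate point is the bookkeeping in the change of variables --- ensuring that $\Phi$ is a genuine $C^3$ diffeomorphism on an interval large enough to contain the range of every competitor, which is guaranteed by $\Phi'\ge1$ together with the a priori bound $\inf_{\partial\Om}g\le\mathsf{y}\le\sup_{\partial\Om}g$; the solvability of the Laplace--Dirichlet problem under the exterior sphere condition is entirely classical.
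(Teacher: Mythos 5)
Your substitution is correct and the argument goes through, but it is a genuinely different route from the paper's. The paper obtains existence by invoking the Leray--Schauder machinery for quasilinear equations (Theorem 15.18 in Gilbarg--Trudinger, after verifying the structure conditions (15.53), (14.19) and (10.36)), and gets uniqueness from the comparison principle of Proposition~\ref{comp-pr}, which is where hypothesis~\eqref{cond-comp2} enters. You instead observe that the coefficient $r=H'H''/(1+(H')^2)$ is a logarithmic derivative, so the arc-length reparametrization $v=\Phi\circ\mathsf{y}$ with $\Phi'=\sqrt{1+(H')^2}$ kills the gradient term exactly ($\Phi''=r\,\Phi'$ gives $\Delta v=0$), reducing the whole problem to the Perron solution of the Laplace equation with boundary data $\Phi\circ g$ and transporting existence, uniqueness, the maximum principle and interior regularity back through the $C^3$ diffeomorphism $\Phi^{-1}$. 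This buys several things: the proof is elementary and self-contained; it yields $\mathsf{y}\in C^{3}(\Om)$ rather than just $C^{2}$; and, notably, it makes no use of~\eqref{cond-comp2}, so it actually proves a slightly stronger statement (that hypothesis is only needed in the paper to run the comparison principle). What the paper's route buys is robustness: it would survive perturbations of the equation for which no exact integrating factor exists, whereas your linearization is special to the structure of~\eqref{criticaly}. Two points of bookkeeping, both shared with the paper's proof rather than specific to yours: the domain $\Om$ should be bounded (both GT Theorem 15.18 and the Perron/maximum-principle argument require it, and the statement does not say so explicitly), and $H$ must be defined on an interval containing $[\inf_{\partial\Om}g,\sup_{\partial\Om}g]$ so that $\Phi$ and $\Phi^{-1}$ are available on the range of every competitor --- which you correctly secure a posteriori from the a priori bound $\inf_{\partial\Om}g\le\mathsf{y}\le\sup_{\partial\Om}g$.
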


\begin{proof}
The existence of solution follows from Theorem 15.18 in~\cite{gt} applied to the following elliptic operator $Q$ (as in~\eqref{criticaly}) upon checking that hypotheses of Theorems 15.5, 14.1 and condition (10.36) in \cite{gt} hold:
\[
Q(u):= \Delta u + \frac{(H''(u) (H'(u)}{1 + (H'(u))^2} |\nabla u|^2. 
\]
  Indeed, checking hypotheses of Theorem 15.5 amounts to tedious verification of assumptions (15.53): in the notation of ~\cite{gt} we have that $\Lambda=\lambda=1$ the ellipticity constants for the Laplacian, $r=-1$, $s=0$ and so, since the coefficients of the principal part of $Q$ are simply $a_{ij}=\delta_{ij}$, the first line of assumptions in (15.53) holds trivially. For the second line in (15.53) we first observe that for us the lower order terms part of the operator $Q$ satisfies the estimate: $b(x, u,p)\leq C |p|^2$. Moreover, recall the differential operators used in \cite{gt} to verify (15.53): 
 \[
 \delta:=D_z+\sum_{i}\frac{p_i}{|p|} D_{x_i}\,\, (\hbox{see (15.8)}),\quad \overline{\delta}:=\sum_{i}p_i D_p\,\, (\hbox{see (15.19)}),\quad \partial_i:=D_{p_i}.
 \]
  Then, the direct differentiation allows us to check the remaining part of assumptions in (15.53) (with $\theta=2$).
   Similarly, checking hypotheses of Theorem 14.1 reduces to checking condition (14.19) on pg. 337 in \cite{gt} (see the  discussion following the statement of Theorem 14.1).
  Finally (10.36) reads: $zb(x,z,0)\leq 0$ and is trivially satisfied, as for us $b(x,z,0)\equiv 0$. 
 
The uniqueness of solution is an immediate consequence of Corollary~\ref{cor-unique}.
\end{proof}

Similarly we get the existence of solutions to the Dirichlet problem with continuous boundary data for $\mathsf{x}$ and $\mathsf{t}$, by \eqref{eq-f1} and~\eqref{harmonic-t}, respectively.

\subsection{The Phragm\`en--Lindel\"of theorem}

Let us notice that if $\mathsf{y}$ solves equation~\eqref{criticaly}, then trivially it is also a harmonic subsolution, meaning that $\Delta \mathsf{y} \geq 0$, provided that $H'(s)\,H''(s)\leq 0$ for all $s\in \R$. Furthermore, recall that by direct computations a Euclidean norm of a point $x\in \Rn$ is subharmonic:
$$
\Delta |x|^\alpha=\alpha(\alpha-2)|x|^{\alpha-2}\leq 0,\quad \hbox{for }\quad 0\leq \alpha\leq 2.
$$
Hence, we may directly apply Theorem 19 in~\cite[Section 9]{pw} to obtain the following variant of the Phragm\`en--Lindel\"of theorem.

Let $\Om\subset \Rn$ be an open connected unbounded set and $\Gamma\subset \partial \Om$ be a portion of the boundary of $\Om$. Suppose that there exists an increasing sequence of bounded domains $\Om_1\subset \Om_2\subset\cdots\subset \Om_l\cdots$ such that $\Om_l\subset \Om$ for any $l=1,2,\ldots$ with two properties:
\begin{itemize}
\item[($\Om1$)] For any $x\in \Om$ there exists $l_0$ such that $x\in \Om_{l_0}$ (and so also for all $l\geq l_0$);
\item[($\Om2$)] For each $l=1,2,\ldots$ it holds that $\partial \Om_l=\Gamma_l\cup \Gamma_l'$, where $\Gamma_l\subset \Gamma$ and $\Gamma_l'\subset \Om$.
\end{itemize}
A classical example of domains satisfying the above conditions is provided by a half-space $\Om=\R^m_{+}$ and half-balls of radius $l$ centered at the origin, i.e.  $\Om_l=B_l\cap \Om$ for $l=1,2,\ldots$. Here $\Gamma=\partial \Om=\R^{m-1}$.

\begin{prop}\label{prop-PL}
 Let $f=(\mathsf{x}(f),\mathsf{y}(f),\mathsf{t}(f))$ be a $C^2$-harmonic mapping defined on an unbounded domain $\Om\subset \Rn\setminus\{0\}$ such that
$$
\mathsf{y}(f)\leq 0 \quad\hbox{on a subset of a boundary }\Gamma\subset \partial \Om \quad \hbox{ and that }\quad H'(s)\,H''(s)\leq 0 \quad\hbox{for all }s\in \R.
$$
Furthermore, let $(\Om_k)$ be a sequence of bounded domains in $\Om$ satisfying conditions ($\Om1$) and ($\Om2$) above.

Then either $\mathsf{y}(f)\leq 0$ in $\Om$ or $\mathsf{y}(f)$ satisfies the following growth condition:
 \[
  \liminf_{l\to \infty} \Big(\sup_{\Gamma_l'} \frac{\mathsf{y}(f)(x)}{|x|^\alpha} \Big) \leq 0,\quad 0\leq \alpha \leq 2.
 \]
\end{prop}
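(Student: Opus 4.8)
The plan is to read the statement as a direct application of the Phragm\`en--Lindel\"of principle in~\cite[Section 9, Theorem~19]{pw}, so that the real work is to exhibit the subsolution/supersolution pair that theorem requires. First I would produce the subsolution: writing $r=\frac{(H''\circ \mathsf{y})(H'\circ \mathsf{y})}{1+(H'\circ \mathsf{y})^2}$, the hypothesis $H'(s)H''(s)\le 0$ forces $r\le 0$ on the range of $\mathsf{y}(f)$, so equation~\eqref{criticaly} gives $\Delta \mathsf{y}(f)=-r\,|\nabla \mathsf{y}(f)|^2\ge 0$; thus $\mathsf{y}(f)$ is subharmonic on $\Om$. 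Next I would fix the comparison function $w(x):=|x|^\alpha$ with $0\le\alpha\le 2$: since $\Om\subset\Rn\setminus\{0\}$ excludes the only singular point of $w$, this $w$ is smooth and strictly positive on $\overline{\Om}$, and by the computation recorded just before the statement it satisfies $\Delta w\le 0$, so it is a positive supersolution of the Laplacian.

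Then I would feed these into Theorem~19 of~\cite{pw} with $L=\Delta$, subsolution $u=\mathsf{y}(f)$, supersolution $w=|x|^\alpha$, the boundary decomposition $\partial\Om_l=\Gamma_l\cup\Gamma_l'$ with $\Gamma_l\subset\Gamma$ and $\Gamma_l'\subset\Om$ from property ($\Om2$), and the sign condition $\mathsf{y}(f)\le 0$ on $\Gamma$; with these identifications the theorem returns the asserted dichotomy. The mechanism I would keep in mind, and could also run by hand, is the comparison argument on each bounded domain $\Om_l$: for $\ep>0$ the function $\mathsf{y}(f)-\ep\,w$ is subharmonic, so by the maximum principle its supremum over $\overline{\Om_l}$ is attained on $\partial\Om_l$. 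On $\Gamma_l\subset\Gamma$ it is nonpositive, since $\mathsf{y}(f)\le 0$ there while $\ep w>0$, and on $\Gamma_l'$ its sign is governed by the ratio $\mathsf{y}(f)/w$. Hence if $\liminf_{l\to\infty}\big(\sup_{\Gamma_l'}\mathsf{y}(f)/|x|^\alpha\big)\le 0$, then along a suitable sequence of indices $\mathsf{y}(f)-\ep w\le 0$ on all of $\partial\Om_l$, whence $\mathsf{y}(f)\le \ep w$ in $\Om_l$; passing to the limit through property ($\Om1$) as $l\to\infty$ and then letting $\ep\to 0$ forces $\mathsf{y}(f)\le 0$ in $\Om$, which is precisely the stated alternative.

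I expect the main obstacle to be organizational rather than conceptual. The delicate point is certifying $w=|x|^\alpha$ as a genuine barrier, and this is exactly where $\Om\subset\Rn\setminus\{0\}$ is used: removing the origin guarantees that $w$ is smooth, positive and superharmonic up to the closure of every $\Om_l$, so the maximum principle may be applied on each piece. The remaining care is to reconcile the nested limits in $l$ and $\ep$ with the precise quantitative hypotheses of Theorem~19 in~\cite{pw}, in particular the way the ratio $\mathsf{y}(f)/w$ is required to behave along the escaping boundary components $(\Gamma_l')$. Once the subsolution/supersolution structure and these boundary data are checked, the conclusion follows with no further computation, as the word ``directly'' in the surrounding text indicates.
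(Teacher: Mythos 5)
Your proposal is, in substance, the paper's own proof: both reduce the statement to Theorem 19 in \cite{pw} with $L=\Delta$, $h\equiv 0$, the subsolution $\mathsf{y}(f)$ (obtained from \eqref{criticaly} together with $H'H''\le 0$, which gives $\Delta\mathsf{y}(f)=-r\,|\nabla\mathsf{y}(f)|^2\ge 0$), and the comparison functions $w_l=|x|^\alpha$; your hand-run comparison on each $\Om_l$ is exactly the mechanism inside that theorem, so there is no genuine difference of route. One point you inherit from the displayed computation preceding the statement deserves attention: in $\Rn$ one has $\Delta|x|^\alpha=\alpha(\alpha+n-2)\,|x|^{\alpha-2}$, not $\alpha(\alpha-2)\,|x|^{\alpha-2}$, so for $n\ge 2$ and $0<\alpha\le 2$ the function $|x|^\alpha$ is subharmonic rather than superharmonic. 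Theorem 19 of \cite{pw} --- and equally your own barrier step, where $\mathsf{y}(f)-\ep w$ must be subharmonic so that the maximum principle applies --- requires $w$ to be a \emph{positive supersolution}, so as written the barrier does not satisfy the hypotheses when $n\ge 2$ and $\alpha>0$. To repair this one must either restrict the exponent to $2-n\le\alpha\le 0$ (for instance the harmonic choice $w=|x|^{2-n}$ when $n\ge 3$), work in dimension $n=1$ where $0\le\alpha\le 1$ is admissible, or replace $|x|^\alpha$ by a positive supersolution adapted to the geometry of $\Om$ (e.g.\ $w=x_n$ for a half-space). This is a defect of the source computation rather than something you introduced, but it does change the range of growth rates $\alpha$ for which the stated alternative is actually established.
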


\begin{proof}
 The proof follows by reduction to the harmonic Phragm\`en--Lindel\"of theorem. Apply Theorem 19 in~\cite[Chapter 2, Section 9]{pw} take $L=\Delta$, the Laplacian, function $h\equiv 0$ and functions $w=w_k:=|x|^\alpha$ for all $k=1,2,\ldots$.
\end{proof}

Suppose that a $C^2$ function $\mathsf{y}(f)$ satisfying~\eqref{criticaly} is given on an unbounded open set $\Om\subset \Rn\setminus\{0\}$ and there exist a sequence of bounded domains $(\Om_k)$ in $\Om$ satisfying conditions ($\Om1$) and ($\Om2$) above. Then, similar assertions as in Proposition~\ref{prop-PL} can be proven for component functions $\mathsf{x}(f)$ and $\mathsf{t}(f)$ by employing again Theorem 19 in~\cite[Chapter 2, Section 9]{pw}. Indeed, by~\eqref{eq-f1} in the notation of~\cite{pw} this equation can be reformulated as follows
\[
L+h(x):=\Delta \mathsf{x}-\frac{H''\circ \mathsf{y}}{1+(H'\circ \mathsf{y})^2}|\nabla \mathsf{y}|^2.
\]
By assuming that $H''\leq 0$, we have that $L+h\geq 0$ and~\cite{pw} applies. Analogous growth condition on $H, H'$ and  $H''$, implied by~\eqref{harmonic-t}, allow us to infer the Phragm\`en--Lindel\"of theorem for $\mathsf{t}(f)$ as well.

\subsection{The three spheres theorem}

Following the same approach as in the case of the Phragm\`en--Lindel\"of theorem based on reduction of the discussion to the case of harmonic subsolutions, we prove the following variant of the three-spheres theorem.

\begin{prop}
 Let $f=(\mathsf{x}(f),\mathsf{y}(f),\mathsf{t}(f))$ be a $C^2$-harmonic mapping defined on a domain $\Om\subset \Rn$ such that $ H'(s)\,H''(s)\leq 0$ for all $s\in \R$. Then, it holds for two concentric balls $B(0, r_1)\Subset B(0, r_2)\subset \Rn$ and the annular region between them, that for all $r_1<r<r_2$ the following inequality holds for component function $\mathsf{y}(f)$:
 \[
 M(r) \leq  M(r_1)\,\frac{r^{2-n}-r_2^{2-n}}{r_1^{2-n}-r_2^{2-n}} + M(r_2)\,\frac{r_1^{2-n}-r^{2-n}}{r_1^{2-n}-r_2^{2-n}},
 \]
 where $M(r):= \sup_{|x|=r} \mathsf{y}(f)(x)$. The similar inequality holds for $\mathsf{x}(f)$ provided that $H''\geq 0$ in $\R$. Moreover, if $H$ satisfies the differential inequality
 \begin{equation}\label{t-cond-3sph}
  (s +H(s) H'(s))H''(s) \geq 0,
 \end{equation}
 then the similar assertion holds for  $\mathsf{t}(f)$.
\end{prop}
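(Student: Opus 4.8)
The plan is to reduce the statement to the classical three spheres (Hadamard) theorem for subharmonic functions, exactly parallel to the reduction to harmonic subsolutions already used in the Phragm\`en--Lindel\"of result. The first step is to record that each component function is subharmonic under the stated hypothesis. For $\mathsf{y}(f)$, equation~\eqref{criticaly} gives $\Delta \mathsf{y}(f) = -\frac{(H''\circ \mathsf{y})(H'\circ \mathsf{y})}{1+(H'\circ \mathsf{y})^2}|\nabla \mathsf{y}(f)|^2$; since the assumption $H'(s)H''(s)\le 0$ forces the prefactor $-\frac{H'H''}{1+(H')^2}$ to be nonnegative, we obtain $\Delta \mathsf{y}(f)\ge 0$ throughout $\Om$, so $\mathsf{y}(f)$ is subharmonic on the annulus.

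Second, I would build a radial harmonic comparison function on the closed annulus $\overline{B(0,r_2)}\setminus B(0,r_1)$. For $n\ge 3$ the radial harmonic functions have the form $h(x)=a+b|x|^{2-n}$, and I choose $a,b$ so that $h\equiv M(r_1)$ on $\{|x|=r_1\}$ and $h\equiv M(r_2)$ on $\{|x|=r_2\}$. Solving the resulting $2\times 2$ linear system and evaluating at radius $r$ yields precisely the right-hand side of the claimed inequality, which one checks is a genuine convex combination of $M(r_1)$ and $M(r_2)$ because $t\mapsto t^{2-n}$ is strictly decreasing, so $r_1^{2-n}>r^{2-n}>r_2^{2-n}$ and both weights are positive. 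Setting $v:=\mathsf{y}(f)-h$, we have $\Delta v=\Delta \mathsf{y}(f)\ge 0$, while on the two bounding spheres $v=\mathsf{y}(f)-M(r_i)\le 0$ by the definition of $M$. The weak maximum principle for subharmonic functions (as in \cite{gt} or \cite{pw}) then gives $v\le 0$ on the whole annulus; restricting to $\{|x|=r\}$ and taking the supremum produces $M(r)\le h(r)$, which is the asserted inequality.

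Third, the two remaining components are treated identically once their subharmonicity is verified. By~\eqref{eq-f1}, $\Delta \mathsf{x}(f)=\frac{H''\circ \mathsf{y}}{1+(H'\circ \mathsf{y})^2}|\nabla \mathsf{y}|^2\ge 0$ exactly when $H''\ge 0$, and by~\eqref{harmonic-t}, $\Delta \mathsf{t}(f)=2\frac{(\mathsf{y}+(H\circ\mathsf{y})(H'\circ\mathsf{y}))(H''\circ\mathsf{y})}{1+(H'\circ\mathsf{y})^2}|\nabla \mathsf{y}|^2\ge 0$ precisely under~\eqref{t-cond-3sph}. In each case the comparison argument above applies verbatim, with $M(r)$ replaced by the corresponding supremum of $\mathsf{x}(f)$ or $\mathsf{t}(f)$ over $\{|x|=r\}$.

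The argument is essentially routine once the reduction is made; the only genuine points requiring care are the sign bookkeeping that converts each of the hypotheses $H'H''\le 0$, $H''\ge 0$, and~\eqref{t-cond-3sph} into subharmonicity of the respective component, together with the dimensional restriction. The formula as stated uses the radial profile $r^{2-n}$ and is meaningful only for $n\ge 3$; for $n=2$ one must replace $|x|^{2-n}$ by the harmonic profile $\log|x|$ (and $r^{2-n}$ by $\log r$), yielding the classical three circles inequality. This $n=2$ caveat is the main thing I would flag, since the denominators $r_1^{2-n}-r_2^{2-n}$ degenerate there.
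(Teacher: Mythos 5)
Your proposal is correct and follows essentially the same route as the paper: both reduce the statement to the classical three spheres theorem by checking that each of the hypotheses $H'H''\le 0$, $H''\ge 0$, and~\eqref{t-cond-3sph} forces $\Delta \mathsf{y}(f)\ge 0$, $\Delta \mathsf{x}(f)\ge 0$, $\Delta \mathsf{t}(f)\ge 0$ via \eqref{criticaly}, \eqref{eq-f1} and \eqref{harmonic-t}, the only difference being that you prove the classical result directly with the radial harmonic comparison function $a+b|x|^{2-n}$ where the paper simply cites Theorem 30 of Protter--Weinberger. Your observation that the stated formula degenerates for $n=2$ and must be replaced by the $\log|x|$ profile is a valid caveat that the paper's statement glosses over.
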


\begin{proof}
 We reduce the discussion to the harmonic three-spheres theorem by applying Theorem 30 in \cite[Section 12]{pw} and by noticing that under the assumptions on $H$, it holds that $\Delta \mathsf{y}(f) \geq 0$ as $\mathsf{y}(f)$ satisfies~\eqref{criticaly}. Moreover, if $H''\geq 0$ then by \eqref{eq-f1} we have that $\Delta \mathsf{x}(f)\geq 0$ and we apply again Theorem 30 in~\cite[Section 12]{pw} obtaining the assertion for $\mathsf{x}(f)$.
Finally, by \eqref{harmonic-t}, if condition~\eqref{t-cond-3sph} holds, then we have $\Delta \mathsf{t}(f)\geq 0$ and again obtain the assertion for $\mathsf{t}(f)$.
\end{proof}

%
%


\begin{thebibliography}{C-J-S}

\bibitem{cl} L. Capogna, F.-H. Lin, \emph{Legendrian Energy Minimizers, Part I: Heisenberg Group Target}, Calc. Var. PDEs 12 (2001), 145--171.

\bibitem{dairb} N. Dairbekov: \emph{The morphism property for mappings with bounded distortion on the
Heisenberg group}, Sib. Math. J. 40 (4) (1999), 682--694

\bibitem{fedr} H. Federer, \emph{Geometric Measure Theory}, Grundlehrer der mathematische Wissenschafte Band 153 Springer-Verlag, NY, 1969.

\bibitem{gt} D.\ Gilbarg, N.\ S.\ Trudinger, \emph{Elliptic Partial Differential Equations of Second Order.} Reprint of the 1998 edition. Classics in Mathematics. Springer-Verlag, Berlin, 2001. 


\bibitem{KORANYI19951} A. Kor\'anyi, H. M. Reimann, \emph{Foundations for the Theory of Quasiconformal Mappings on the Heisenberg Group}, Adv. Math. 111(1) (1995), 1--87.

\bibitem{korschn} N. Korevaar, R. Schoen, \emph{Sobolev spaces and harmonic maps for metric space targets}, Comm. Anal. Geom. 1 (1993), no. 3-4, 561--659.

Proc. Lect. Notes Geom. Topol. eds. R. Schoen and S. T. Yau, Lectures on harmonic maps, Vol. II (1997) 204--310.

\bibitem{mz} J. Mal\'y, W.P. Ziemer, \emph{Fine regularity of solutions of elliptic partial differential equations.} Math. Surveys Monogr., 51 American Mathematical Society, Providence, RI, 1997. xiv+291 pp.

\bibitem{pw}  M. Protter, H. Weinberger, \emph{Maximum principles in differential equations}, Prentice-Hall, Inc., Englewood Cliffs, N.J. 1967.

\bibitem{puse-book} P. Pucci, J. Serrin, \emph{The maximum principle}, Progress in Nonlinear Differential Equations and their Applications, 73. Birkh\"auser Verlag, Basel, 2007.

\bibitem{tkh} K.-H. Tan, \emph{Sobolev classes and horizontal energy minimizers between Carnot-Carath\'eodory spaces}, Commun. Contemp. Math., 6(2) (2004), 315--347.


\end{thebibliography}
\end{document}